\documentclass[preprint,12pt]{elsarticle}

\usepackage{amsmath, amssymb, amsthm}
\usepackage{mathtools}
\usepackage{hyperref}
\usepackage{enumitem}
\usepackage{xcolor}

\newtheorem{theorem}{Theorem}[section]
\newtheorem{lemma}[theorem]{Lemma}
\newtheorem{proposition}[theorem]{Proposition}
\newtheorem{corollary}[theorem]{Corollary}

\theoremstyle{definition}
\newtheorem{definition}[theorem]{Definition}
\newtheorem{example}[theorem]{Example}
\newtheorem{remark}[theorem]{Remark}

\newcommand{\psd}{\succeq 0}
\DeclareMathOperator{\Tr}{Tr}
\DeclareMathOperator{\sgn}{sgn}

\begin{document}

\begin{frontmatter}

\title{Log-Concavity and Infinite Log-Concavity of Linear Recurrent
       Sequences with Linear Coefficients via Companion Matrix Methods}

\author[inst1]{[Piero Giacomelli]}
\ead{pgiacome@gmail.com}


\address[inst1]{Department of Information Technology, Tenax SPA, [Verona, Italy]}

\begin{abstract}
We study log-concavity properties of real sequences $(a_n)_{n \ge 0}$
satisfying a $d$-th order linear recurrence whose coefficients are linear
functions of $n$; the so-called P-recursive (or holonomic) sequences.
Writing the recurrence in companion-matrix form $\mathbf{v}_{n+1} = M_n\,\mathbf{v}_n$
with $M_n = nA + B$, we show that the log-concave operator value
$\mathcal{L}(a_n) = b_n \coloneqq a_n^2 - a_{n+1}a_{n-1}$
is a quadratic form in the state vector $\mathbf{v}_n$, and identify
the matrix $Q_n = Q^{(0)} + nQ^{(1)}$ whose positive semi-definiteness
gives a sufficient condition for log-concavity.
For the class of second-order recurrences with constant coefficients,
we prove a tight (necessary and sufficient) criterion for
the sequence to be $\infty$-log-concave,
a consequence of the fact that $\mathcal{L}(a_n)$ is itself a geometric
sequence so that $\mathcal{L}^2(a_n) = 0$ identically.
We obtain analogous tight criteria for sequences fixed by $\mathcal{L}$,
and for P-recursive sequences satisfying a dominant-root asymptotic behaviour.
We leave some further insight in case  this criteria break down in full generality.
\end{abstract}

\begin{keyword}
  log-concavity \sep infinite log-concavity \sep linear recurrence \sep
  P-recursive sequences \sep companion matrix \sep Tur\'{a}n inequality \sep
  total positivity
\MSC[2020] 11B37 \sep 05A20 \sep 15A45 \sep 39A70
\end{keyword}

\end{frontmatter}

\section{Introduction}
\label{sec:intro}

Let $(a_k)_{k \ge 0}$ be a sequence of real numbers.
We say that $(a_k)$ is \emph{log-concave} if
\begin{equation}
  \label{eq:logconcave}
  a_k^2 \;\ge\; a_{k+1}\,a_{k-1} \qquad \text{for all } k \ge 1.
\end{equation}
This simple yet powerful condition arises in a remarkably wide range of
settings, including Hodge theory~\cite{Huh2022}, the theory of
real-rooted polynomials~\cite{Branden2011},
probability and statistics~\cite{Saumard2014},
and the enumeration of combinatorial structures~\cite{Stanley1989}.
Following the work of Boros and Moll~\cite{BorosMoll2004} on a
specialization of Jacobi polynomials,
McNamara and Sagan~\cite{McNamaraSagan2010}
formalized the iterated version of this condition.
Given a sequence $(a_k)$, define the \emph{log-concave operator}
$\mathcal{L}$ by
\begin{equation}
  \label{eq:Lop}
  \mathcal{L}(a_k) \;=\; (b_k), \qquad b_k = a_k^2 - a_{k+1}a_{k-1}.
\end{equation}
Then $(a_k)$ is log-concave if and only if $(b_k)$ is a nonnegative
sequence. Iterating, one defines $(a_k)$ to be
\emph{$\infty$-log-concave} if $\mathcal{L}^i(a_k)$ is nonnegative
for every $i \ge 1$.
Giacomelli~\cite{Giacomelli2025} recently studied the convergence
behavior of sequences under iterated application of $\mathcal{L}$,
establishing, among other results, that $\mathcal{L}$ preserves
boundedness and that convergence of $(a_k)$ implies convergence to
zero of all iterates $\mathcal{L}^i(a_k)$.

\medskip
A classical question in this area is to characterize, in terms of an
explicit and finite condition on the sequence, when $(a_k)$
is log-concave or $\infty$-log-concave.
For sequences satisfying a linear recurrence with \emph{constant}
coefficients, Liu and Wang~\cite{LiuWang2007} gave a criterion in
terms of three consecutive initial values and the recurrence
coefficients.
For sequences defined by a three-term recurrence with coefficients
that are linear functions of $n$ --- the P-recursive setting ---
sufficient asymptotic conditions were given
in~\cite{HouZhang2021,LogconcavePrecursive2021}.
However, a \emph{necessary and sufficient} (tight) criterion has
remained elusive in general, and the specific structural question
of what $\mathcal{L}$ does to the companion matrix representation
has not been investigated.

\medskip
The objective of this paper is to address these questions
for the class of P-recursive sequences of the form
\begin{equation}
  \label{eq:recurrence}
  a_{n+1} \;=\; \sum_{k=0}^{d-1}(p_k\, n + q_k)\,a_{n-k},
  \qquad n \ge d-1,
\end{equation}
with $p_k, q_k \in \mathbb{R}$ and initial data
$(a_0, a_1, \ldots, a_{d-1}) \in \mathbb{R}^d$.
The coefficients $c_k(n) = p_k n + q_k$ are linear in $n$, which
makes the associated companion matrix $M_n = nA + B$ an affine
function of $n$.

\medskip
As a results of these investigation we got the following results. First, we show that $b_n = \mathcal{L}(a_n)$
equals $\mathbf{v}_n^\top Q_n \mathbf{v}_n$, a quadratic form in the state
vector $\mathbf{v}_n$, where $Q_n = Q^{(0)} + n Q^{(1)}$ is an explicit
symmetric matrix whose positive semi-definiteness gives a sufficient condition
for log-concavity (Theorem~\ref{thm:log-concavity-criterion}).
In the \emph{second-order constant-coefficient} case
$a_{n+1} = \alpha a_n + \beta a_{n-1}$, the sequence $\mathcal{L}(a_n)$ is
itself a geometric sequence so that $\mathcal{L}^2(a_n) = 0$ exactly; as a
consequence, \emph{$(a_n)$ is $\infty$-log-concave if and only if it is
log-concave}, and we give explicit conditions on the initial data
$(a_0, a_1)$ and the coefficients $(\alpha, \beta)$
(Theorem~\ref{thm:tight-constant}).
For sequences fixed by $\mathcal{L}$, that is, satisfying
$\mathcal{L}(a_n) = a_n$, we show that $\infty$-log-concavity is
equivalent to nonnegativity of the sequence itself
(Theorem~\ref{thm:fixed-point}), while for P-recursive sequences
admitting a dominant real characteristic root and a monotone
Tur\'{a}n ratio, $\infty$-log-concavity is once again equivalent to
log-concavity (Theorem~\ref{thm:tight-prec}).
Finally, we explain why a general finite tight criterion is not known
to exist, through a connection to the open problem of positivity testing
for P-recursive sequences (Remark~\ref{rem:undecidable}).

\medskip
The paper is structured as follows.
In Section~\ref{sec:prelim} we recall definitions and the companion
matrix representation.
In Section~\ref{sec:logconcave} we establish the quadratic form
criterion for log-concavity.
In Section~\ref{sec:sufficient} we collect sufficient conditions
for $\infty$-log-concavity from the literature.
Section~\ref{sec:tight} contains the main new results on tight criteria.
In Section~\ref{sec:examples} we illustrate the theory with examples,
and Section~\ref{sec:open} lists open problems.

\section{Preliminaries}
\label{sec:prelim}

We briefly recall some basic definition using the notation and framework established in~\cite{McNamaraSagan2010};
see also~\cite{Giacomelli2025}.  

\begin{definition}[\cite{McNamaraSagan2010}]
  \label{def:logconcave}
  Let $(a_k)_{k \in \mathbb{N}}$ be a sequence of real numbers.
  The \emph{log-concave operator} $\mathcal{L}$ is defined by
  \[
    \mathcal{L}(a_k) = (b_k), \qquad b_k = a_k^2 - a_{k+1}a_{k-1}.
  \]
  The sequence $(a_k)$ is \emph{log-concave} if $b_k \ge 0$ for all $k$.
  For $i \ge 1$, $(a_k)$ is \emph{$i$-fold log-concave} if
  $\mathcal{L}^i(a_k) \ge 0$, and \emph{$\infty$-log-concave} if
  $\mathcal{L}^i(a_k) \ge 0$ for all $i \in \mathbb{N}$.
\end{definition}

We note that $b_n$ admits the determinantal representation
\[
  b_n \;=\; \det\begin{pmatrix}a_{n-1} & a_n \\ a_n & a_{n+1}\end{pmatrix},
\]
which is the $2\times 2$ Hankel determinant of three consecutive terms,
also known as the \emph{Tur\'{a}n determinant}.
In particular, log-concavity of $(a_k)$ is equivalent to all $2\times 2$
Hankel minors of the sequence being nonnegative; see~\cite{LiuWang2007}.
We can now introduce some definition regarding  P-recursive sequences and the companion matrix.
In particular we have:
\begin{definition}
  \label{def:precursive}
  A sequence $(a_n)_{n\ge 0}$ is called \emph{P-recursive of order $d$}
  (also called \emph{holonomic} or \emph{D-finite}) if it satisfies a
  recurrence of the form~\eqref{eq:recurrence} with polynomial
  coefficients $c_k(n)$.
  Throughout this paper we restrict to the case where
  $c_k(n) = p_k n + q_k$ is linear in $n$.
\end{definition}

Key examples of P-recursive sequences with linear coefficients include
the binomial coefficients $\binom{2n}{n}$, Catalan numbers, Legendre
polynomial values, and general hypergeometric sequences.

By introducing the \emph{state vector}
$\mathbf{v}_n = (a_n, a_{n-1}, \ldots, a_{n-d+1})^\top \in \mathbb{R}^d$,
we can rewrite the recurrence~\eqref{eq:recurrence} using the first-order
matrix system
\begin{equation}
  \label{eq:matrix-system}
  \mathbf{v}_{n+1} = M_n\,\mathbf{v}_n,
\end{equation}
where the \emph{companion matrix} $M_n$ has the affine form
\begin{equation}
  \label{eq:companion}
  M_n = nA + B,
\end{equation}
with
\[
  A =
  \begin{pmatrix}
    p_0    & p_1  & p_2  & \cdots & p_{d-1} \\
    0      & 0    & 0    & \cdots & 0        \\
    0      & 1    & 0    & \cdots & 0        \\
    \vdots &      & \ddots &       & \vdots  \\
    0      & 0    & \cdots & 1    & 0
  \end{pmatrix}, \qquad
  B =
  \begin{pmatrix}
    q_0    & q_1  & q_2  & \cdots & q_{d-1} \\
    1      & 0    & 0    & \cdots & 0        \\
    0      & 1    & 0    & \cdots & 0        \\
    \vdots &      & \ddots &       & \vdots  \\
    0      & 0    & \cdots & 1    & 0
  \end{pmatrix}.
\]
In terms of the standard basis vectors $e_1, e_2 \in \mathbb{R}^d$, we can now write:
\[
  a_n = e_1^\top\,\mathbf{v}_n, \qquad
  a_{n+1} = e_1^\top M_n\,\mathbf{v}_n, \qquad
  a_{n-1} = e_2^\top\,\mathbf{v}_n.
\]

\begin{remark}
  In the special case where $a_n$ is a constant-coefficient recurrence (ie. $p_k = 0$ for all $k$),
  we have $A = 0$ and $M_n = B$ for all $n$, and the will dedicate Section~\ref{sec:tight} toi explore this special case.
\end{remark}

\section{Log-Concavity via the Companion Matrix}
\label{sec:logconcave}

Given this basic definitions we are now ready to get a quadratic form representation of $b_k = \mathcal{L}(a_k)$.
To do this we need the following lemma that is the cornerstore of our approach.

latex\begin{lemma}
\label{lem:quadratic}
For all $n \ge 1$,
\[
  b_n \;=\; \mathcal{L}(a_n) \;=\; \mathbf{v}_n^\top\,Q_n\,\mathbf{v}_n,
\]
where $Q_n$ is the symmetric $d\times d$ matrix
\begin{equation}
  \label{eq:Qn}
  Q_n \;=\; e_1 e_1^\top
             \;-\; \tfrac{1}{2}\bigl(M_n^\top\,e_1\,e_2^\top
                                   + e_2\,e_1^\top\,M_n\bigr).
\end{equation}
Since $M_n = nA + B$, the matrix $Q_n$ decomposes as
\begin{equation}
  \label{eq:Qn-decomp}
  Q_n \;=\; Q^{(0)} + n\,Q^{(1)},
\end{equation}
where $Q^{(0)} = e_1 e_1^\top - \frac{1}{2}(B^\top e_1 e_2^\top
+ e_2 e_1^\top B)$ and
$Q^{(1)} = -\frac{1}{2}(A^\top e_1 e_2^\top + e_2 e_1^\top A)$.
\end{lemma}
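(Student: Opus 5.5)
The plan is a direct computation: write each of the three terms $a_n$, $a_{n+1}$, $a_{n-1}$ as a linear functional of the state vector $\mathbf{v}_n$, turn $b_n$ into a bilinear expression in $\mathbf{v}_n$, and then symmetrize.

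First I would record the three identities listed at the end of Section~\ref{sec:prelim}: $a_n = e_1^\top \mathbf{v}_n$, $a_{n-1} = e_2^\top \mathbf{v}_n$ and $a_{n+1} = e_1^\top M_n \mathbf{v}_n$. The first two hold by the definition of $\mathbf{v}_n = (a_n, a_{n-1}, \ldots, a_{n-d+1})^\top$. The third holds because the first row of $M_n = nA+B$ is $(p_0 n + q_0, \ldots, p_{d-1}n + q_{d-1})$, so by \eqref{eq:recurrence} the first entry of $M_n\mathbf{v}_n$ is exactly $a_{n+1}$.

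Substituting these into $b_n = a_n^2 - a_{n+1}a_{n-1}$ gives
\[
  b_n = \mathbf{v}_n^\top e_1 e_1^\top \mathbf{v}_n - (\mathbf{v}_n^\top e_2)(e_1^\top M_n \mathbf{v}_n)
      = \mathbf{v}_n^\top\bigl(e_1e_1^\top - e_2 e_1^\top M_n\bigr)\mathbf{v}_n .
\]
The matrix $N = e_2 e_1^\top M_n$ is not symmetric. I would therefore use the elementary fact that a scalar equals its transpose, so $\mathbf{x}^\top N \mathbf{x} = \mathbf{x}^\top N^\top \mathbf{x} = \tfrac12\,\mathbf{x}^\top (N+N^\top)\mathbf{x}$. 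Since $N^\top = M_n^\top e_1 e_2^\top$, this replaces $N$ by $\tfrac12(M_n^\top e_1 e_2^\top + e_2 e_1^\top M_n)$, which yields \eqref{eq:Qn}. Symmetry of $Q_n$ is then immediate, because $e_1e_1^\top$ is symmetric and the bracket is of the form $X + X^\top$.

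For the decomposition \eqref{eq:Qn-decomp}, note that $Q_n$ depends affinely on $M_n$ through the linear map $M \mapsto \tfrac12(M^\top e_1 e_2^\top + e_2 e_1^\top M)$. Inserting $M_n = nA + B$ and collecting the constant and $n$-linear parts gives the stated $Q^{(0)}$ and $Q^{(1)}$.

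The only real obstacle is bookkeeping about the range of validity, since the algebra itself is routine. The vector $\mathbf{v}_n$ involves $a_{n-d+1}$, and the recurrence is only asserted for $n \ge d-1$. So I would either read ``$n\ge 1$'' as $n \ge \max(1,d-1)$, or extend the sequence by zeros for negative indices. I would also note that $d\ge 2$ is needed for $e_2$ to exist. For $d=1$ one instead works with the augmented state $(a_n,a_{n-1})^\top$, which corresponds to the $d=2$ companion matrix with $p_1=q_1=0$, and the same computation applies verbatim.
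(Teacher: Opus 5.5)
This is correct and is essentially the paper's proof: the same three linear-form identities, then the same symmetrization, since your $\tfrac12(N+N^\top)$ with $N=e_2e_1^\top M_n$ is the paper's identity $\mathbf{x}^\top\mathbf{u}\,\mathbf{w}^\top\mathbf{x}=\tfrac12\mathbf{x}^\top(\mathbf{u}\mathbf{w}^\top+\mathbf{w}\mathbf{u}^\top)\mathbf{x}$ with $\mathbf{u}=M_n^\top e_1$, $\mathbf{w}=e_2$. Getting $a_{n+1}=e_1^\top M_n\mathbf{v}_n$ from the first row of $M_n$ alone, rather than from $\mathbf{v}_{n+1}=M_n\mathbf{v}_n$ as the paper does, is the safer choice: the displayed $A$ repeats the subdiagonal ones of $B$, so the full system is off for $d\ge 3$, while the first row, which is all the lemma uses, is right.

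One correction to your bookkeeping remark: padding with zeros at negative indices does not rescue $1\le n<d-1$, because the recurrence at such $n$ would then impose an extra constraint on the given initial data, so the lemma should simply be read for $n\ge\max(1,d-1)$.
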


\begin{proof}
The companion matrix system $\mathbf{v}_{n+1} = M_n\mathbf{v}_n$
and the definition of $\mathbf{v}_n = (a_n, a_{n-1}, \ldots,
a_{n-d+1})^\top$ give
\[
  a_n = e_1^\top\mathbf{v}_n, \qquad
  a_{n-1} = e_2^\top\mathbf{v}_n, \qquad
  a_{n+1} = e_1^\top M_n\mathbf{v}_n,
\]
where $e_1$ and $e_2$ are the first two standard basis vectors of
$\mathbb{R}^d$. Substituting directly into the definition
$b_n = a_n^2 - a_{n+1}a_{n-1}$ yields
\[
  b_n \;=\;
  \mathbf{v}_n^\top(e_1 e_1^\top)\mathbf{v}_n
  \;-\;
  (e_1^\top M_n\mathbf{v}_n)(e_2^\top\mathbf{v}_n).
\]
The second term is a product of two linear forms in $\mathbf{v}_n$.
Since $\mathbf{x}^\top\mathbf{u}\cdot\mathbf{w}^\top\mathbf{x}
= \mathbf{x}^\top\bigl(\frac{1}{2}(\mathbf{u}\mathbf{w}^\top
+ \mathbf{w}\mathbf{u}^\top)\bigr)\mathbf{x}$ holds for all
$\mathbf{x}$, setting $\mathbf{u} = M_n^\top e_1$ and
$\mathbf{w} = e_2$ gives
\[
  (e_1^\top M_n\mathbf{v}_n)(e_2^\top\mathbf{v}_n)
  \;=\;
  \mathbf{v}_n^\top
  \tfrac{1}{2}\bigl(M_n^\top e_1 e_2^\top + e_2 e_1^\top M_n\bigr)
  \mathbf{v}_n.
\]
Combining the two terms and collecting the result into a single
symmetric matrix gives $b_n = \mathbf{v}_n^\top Q_n \mathbf{v}_n$
with $Q_n$ as in~\eqref{eq:Qn}. The affine decomposition
$Q_n = Q^{(0)} + nQ^{(1)}$ follows immediately from substituting
$M_n = nA + B$ and grouping by powers of $n$.
\end{proof}
As an example if we consider the case $d=2$ we have the following 
\begin{example}
\label{ex:d2-quadratic}
\textbf{(The case $d = 2$.)}
Let $a_{n+1} = (p_0 n + q_0)\,a_n + (p_1 n + q_1)\,a_{n-1}$ so
that $\mathbf{v}_n = (a_n, a_{n-1})^\top$,
$e_1 = (1,0)^\top$, and $e_2 = (0,1)^\top$.
A direct calculation gives
$M_n^\top e_1 = (np_0 + q_0,\; np_1 + q_1)^\top$,
and the formula~\eqref{eq:Qn} yields
\[
  Q_n \;=\;
  \begin{pmatrix}
    1 & -\dfrac{np_0+q_0}{2} \\[8pt]
    -\dfrac{np_0+q_0}{2} & -(np_1+q_1)
  \end{pmatrix},
\]
so that
\[
  \mathbf{v}_n^\top Q_n\mathbf{v}_n
  \;=\;
  a_n^2 \;-\; (np_0+q_0)\,a_n a_{n-1}
         \;-\; (np_1+q_1)\,a_{n-1}^2.
\]
Substituting $a_{n+1} = (np_0+q_0)\,a_n + (np_1+q_1)\,a_{n-1}$
confirms that this expression equals
$b_n = a_n^2 - a_{n+1}a_{n-1}$, verifying the lemma in the
second-order case.
\end{example}

Now that we have all the setup for the matrix representation of the log-concave operator in terms of the companion matrix we are now ready to explore the necessary and sufficient matrix condition for log-concavity. 

\begin{theorem}
  \label{thm:log-concavity-criterion}
  Let $(a_n)$ satisfy~\eqref{eq:recurrence} with companion matrix
  $M_n = nA + B$ as in~\eqref{eq:companion}. The following hold.
  \begin{enumerate}[label=\normalfont(\roman*)]
    \item
      $b_n \ge 0$ for all $n \ge 1$ if and only if
      $\mathbf{v}_n^\top Q_n \mathbf{v}_n \ge 0$ for all $n \ge 1$.
    \item
      A sufficient condition for log-concavity is $Q_n \psd$ for all
      $n \ge 1$.
    \item
      If $Q^{(1)} \psd$ with minimum eigenvalue $\lambda_{\min}^{(1)} > 0$,
      then $Q_n \psd$ for all $n \ge N$, where
      \[
        N \;=\; \left\lceil
          \frac{\max(0,\,-\lambda_{\min}(Q^{(0)}))}
               {\lambda_{\min}^{(1)}}
        \right\rceil + 1.
      \]
  \end{enumerate}
\end{theorem}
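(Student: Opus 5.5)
Parts (i) and (ii) follow directly from Lemma~\ref{lem:quadratic}, and part (iii) from a Weyl-type eigenvalue bound applied to the affine decomposition~\eqref{eq:Qn-decomp}.

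For (i), Lemma~\ref{lem:quadratic} gives the identity $b_n = \mathbf{v}_n^\top Q_n \mathbf{v}_n$ for every $n \ge 1$. Nonnegativity of one side for all $n\ge 1$ is then literally the same statement as nonnegativity of the other, so (i) is just a restatement of the lemma. For (ii), if $Q_n \psd$ then $\mathbf{x}^\top Q_n \mathbf{x} \ge 0$ for every $\mathbf{x}\in\mathbb{R}^d$. In particular this holds for $\mathbf{x} = \mathbf{v}_n$, so $b_n \ge 0$ by (i). The only thing to note here is that (ii) is merely sufficient: it discards the information that $\mathbf{v}_n$ lies on a specific orbit of the products $M_{n-1}\cdots M_{d-1}$.

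For (iii), I will use the variational characterization $\lambda_{\min}(S) = \min_{\|\mathbf{x}\|=1}\mathbf{x}^\top S\mathbf{x}$ for symmetric $S$. This yields the Weyl-type inequality
\[
  \lambda_{\min}(Q^{(0)} + nQ^{(1)}) \;\ge\; \lambda_{\min}(Q^{(0)}) + n\,\lambda_{\min}^{(1)} \qquad (n\ge 0).
\]
The right-hand side is nonnegative as soon as $n \ge \max(0,-\lambda_{\min}(Q^{(0)}))/\lambda_{\min}^{(1)}$, using $\lambda_{\min}^{(1)}>0$. The proposed $N$ (ceiling plus one) is at least this threshold, so $Q_n \psd$ for every integer $n\ge N$. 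The "$+1$" only makes $N$ a safe integer and is not needed for sharpness.

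I do not expect a genuine obstacle; the main point requiring care is the status of the hypothesis in (iii). The matrix $Q^{(1)} = -\tfrac12(\mathbf{u}\mathbf{w}^\top + \mathbf{w}\mathbf{u}^\top)$, with $\mathbf{u}=A^\top e_1$ and $\mathbf{w}=e_2$, has rank at most $2$. Hence for $d\ge 3$ its minimum eigenvalue is $\le 0$. For $d=2$, its determinant is $-\tfrac14(u_1w_2-u_2w_1)^2\le 0$, so it cannot be positive definite either. The statement of (iii) therefore holds as a valid implication, but its hypothesis is never satisfied for $d \ge 2$. I would record this in a remark right after the proof, and suggest replacing it by the condition $\mathbf{v}_n^\top Q^{(1)}\mathbf{v}_n\ge c\|\mathbf{v}_n\|^2$ along the orbit, or by positivity on a cone containing the $\mathbf{v}_n$.
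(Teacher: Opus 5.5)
Your proof is correct and matches the paper's: (i) and (ii) are read off directly from Lemma~\ref{lem:quadratic}, and (iii) uses the same bound $\lambda_{\min}(Q^{(0)}+nQ^{(1)})\ge\lambda_{\min}(Q^{(0)})+n\,\lambda_{\min}^{(1)}$. Your closing remark is also correct and goes beyond the paper, which does not note it: the hypothesis of (iii) can never hold, so (iii) is only vacuously true. A quicker way to see this than your rank and determinant argument is $e_1^\top Q^{(1)}e_1 = -p_0\,(e_2^\top e_1) = 0$, which forces $\lambda_{\min}(Q^{(1)})\le 0$ for every $d\ge 2$.
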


\begin{proof}
  Part (i) is a direct restatement of Lemma~\ref{lem:quadratic}.
  Part (ii) follows because $Q_n \psd$ implies
  $\mathbf{v}_n^\top Q_n \mathbf{v}_n \ge 0$ for all $\mathbf{v}_n$.
  For (iii), since $Q_n = Q^{(0)} + nQ^{(1)}$,
  \[
    \lambda_{\min}(Q_n)
    \;\ge\; \lambda_{\min}(Q^{(0)}) + n\,\lambda_{\min}^{(1)} \;\ge\; 0
    \quad\text{for all }n \ge N.
  \]
\end{proof}

\medskip
The following proposition expresses log-concavity as a condition on
the initial vector $\mathbf{v}_0 = (a_0,\ldots,a_{d-1})^\top$ alone.

\begin{proposition}
  \label{prop:initial-data}
  Let $\Pi_n = \prod_{j=0}^{n-1} M_j$ denote the product of the first
  $n$ companion matrices, so that $\mathbf{v}_n = \Pi_n\,\mathbf{v}_0$.
  Then $b_n \ge 0$ for all $n \ge 1$ if and only if
  \[
    \mathbf{v}_0^\top\,R_n\,\mathbf{v}_0 \;\ge\; 0 \qquad \forall\,n \ge 1,
  \]
  where $R_n = \Pi_n^\top Q_n\,\Pi_n$ depends only on the recurrence
  coefficients $(p_k, q_k)$.
  In particular, $(a_n)$ is log-concave for \emph{all} initial data
  $\mathbf{v}_0$ if and only if $R_n \psd$ for all $n \ge 1$.
\end{proposition}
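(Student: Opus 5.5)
The plan is to reduce the statement to Lemma~\ref{lem:quadratic} by writing the state vector in terms of the initial data. First I will verify that $\mathbf{v}_n = \Pi_n\mathbf{v}_0$, with the product ordered so that later matrices act on the left: $\Pi_n = M_{n-1}M_{n-2}\cdots M_0$. This follows by induction on $n$ from \eqref{eq:matrix-system}. The base case is $\Pi_0 = I$, and the inductive step is $\mathbf{v}_{n+1} = M_n\mathbf{v}_n = M_n\Pi_n\mathbf{v}_0 = \Pi_{n+1}\mathbf{v}_0$.

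One indexing subtlety must be handled here. The recurrence \eqref{eq:recurrence} only holds for $n \ge d-1$, and $\mathbf{v}_0$ is meant to be $(a_0,\dots,a_{d-1})^\top$. So I will re-index the state vector so that $\mathbf{v}_0$ collects the initial data, and shift the companion matrices accordingly: $M_j$ becomes $M_{j+d-1}$, and the $n$ in $Q_n$ is shifted to match. None of this affects the argument, which only uses that $\mathbf{v}_n$ is a fixed linear image of $\mathbf{v}_0$ by a matrix built from the coefficients.

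Next I substitute into Lemma~\ref{lem:quadratic}:
\[
  b_n = \mathbf{v}_n^\top Q_n\mathbf{v}_n = (\Pi_n\mathbf{v}_0)^\top Q_n(\Pi_n\mathbf{v}_0) = \mathbf{v}_0^\top R_n\mathbf{v}_0 .
\]
Here $R_n = \Pi_n^\top Q_n\Pi_n$ is symmetric because $Q_n$ is. Since each $M_j$ and each $Q_n$ is built only from $A$ and $B$, and hence only from $(p_k,q_k)$, so is $R_n$. The equivalence "$b_n \ge 0$ for all $n \ge 1$ iff $\mathbf{v}_0^\top R_n\mathbf{v}_0 \ge 0$ for all $n \ge 1$" is then an identity term by term, in the same way as Theorem~\ref{thm:log-concavity-criterion}(i).

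For the "in particular" clause, fix the recurrence and let $\mathbf{v}_0$ range over all of $\mathbb{R}^d$. If every $R_n \psd$, then every quadratic form $\mathbf{v}_0^\top R_n\mathbf{v}_0$ is nonnegative, so the sequence is log-concave for every choice of initial data. Conversely, suppose $R_m$ is not positive semidefinite for some $m$. Then there is a vector $\mathbf{x}$ with $\mathbf{x}^\top R_m\mathbf{x} < 0$. Because the initial data are free, we may take $\mathbf{v}_0 = \mathbf{x}$, which gives $b_m < 0$ and breaks log-concavity.

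The main obstacle is bookkeeping rather than substance: fixing the product order and the index shift so that $\Pi_n$ and $Q_n$ refer to the same $n$. I would settle this first by adopting the shifted convention explicitly. The second point needing care is the converse direction, which relies on every vector of $\mathbb{R}^d$ being admissible as initial data. This holds because \eqref{eq:recurrence} places no constraints on $(a_0,\dots,a_{d-1})$, and because the positive semidefiniteness of a symmetric matrix is tested on all of $\mathbb{R}^d$.
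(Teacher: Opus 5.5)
Your proposal is correct and is essentially the paper's proof, which consists only of substituting $\mathbf{v}_n = \Pi_n\mathbf{v}_0$ into $b_n = \mathbf{v}_n^\top Q_n\mathbf{v}_n$ from Lemma~\ref{lem:quadratic}; your induction fixing the product order $\Pi_n = M_{n-1}\cdots M_0$ and your explicit converse for the ``in particular'' clause (taking $\mathbf{v}_0=\mathbf{x}$ with $\mathbf{x}^\top R_m\mathbf{x}<0$) spell out what the paper leaves implicit. One bookkeeping caution, an imprecision already present in the paper's own statement: in your re-indexing the range of $n$ must shift along with the matrices, because after the shift $b_{d-1}$ corresponds to $n=0$, and for $d\ge 3$ the terms $b_1,\dots,b_{d-2}$ fall outside the scope of the lemma altogether.
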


\begin{proof}
  Substituting $\mathbf{v}_n = \Pi_n\,\mathbf{v}_0$ into
  $b_n = \mathbf{v}_n^\top Q_n\,\mathbf{v}_n$ gives the result.
\end{proof}

\subsection{The second-order case in explicit form}

We specialize to $d=2$, giving
$a_{n+1} = (p_0 n + q_0)\,a_n + (p_1 n + q_1)\,a_{n-1}$
with $\mathbf{v}_n = (a_n, a_{n-1})^\top$.

\begin{proposition}
  \label{prop:2nd-order-logconcave}
  For $d = 2$, the matrix $Q_n$ is
  \[
    Q_n \;=\;
    \begin{pmatrix}
      1 & -\frac{np_0+q_0}{2} \\[4pt]
      -\frac{np_0+q_0}{2} & -(np_1+q_1)
    \end{pmatrix}.
  \]
  The condition $b_n \ge 0$ expands to
  \begin{equation}
    \label{eq:bn-d2}
    a_n^2 + (np_1+q_1)\,a_{n-1}^2 \;\ge\; (np_0+q_0)\,a_n\,a_{n-1}.
  \end{equation}
  The matrix $Q_n \psd$ if and only if
  \begin{equation}
    \label{eq:Qpsd-d2}
    (np_1+q_1) \le 0 \qquad\text{and}\qquad (np_0+q_0)^2 \le -4(np_1+q_1).
  \end{equation}
\end{proposition}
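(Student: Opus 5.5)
The plan is to obtain all three assertions directly from Lemma~\ref{lem:quadratic}, specialized to $d=2$ as in Example~\ref{ex:d2-quadratic}, and then apply the standard criterion for a real symmetric $2\times 2$ matrix to be positive semi-definite.

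First, the matrix $Q_n$. With $e_1=(1,0)^\top$, $e_2=(0,1)^\top$ and $M_n^\top e_1=(np_0+q_0,\,np_1+q_1)^\top$, I will compute the two rank-one pieces. The outer product $M_n^\top e_1 e_2^\top$ has second column $(np_0+q_0,\,np_1+q_1)^\top$ and first column zero. Its transpose $e_2e_1^\top M_n$ has second row $(np_0+q_0,\,np_1+q_1)$. Their half-sum has off-diagonal entries $\tfrac12(np_0+q_0)$ and $(2,2)$ entry $np_1+q_1$. Subtracting this from $e_1e_1^\top$ gives the displayed $Q_n$.

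Second, the inequality~\eqref{eq:bn-d2}. Expanding $\mathbf{v}_n^\top Q_n\mathbf{v}_n$ with $\mathbf{v}_n=(a_n,a_{n-1})^\top$ gives
\[
a_n^2-(np_0+q_0)a_na_{n-1}-(np_1+q_1)a_{n-1}^2 .
\]
By Theorem~\ref{thm:log-concavity-criterion}(i) this quantity equals $b_n$. Setting it $\ge 0$ and rearranging produces an inequality with $-(np_1+q_1)a_{n-1}^2$ on the left. The displayed \eqref{eq:bn-d2} has $+(np_1+q_1)a_{n-1}^2$ instead. So I would flag this: either the statement contains a sign typo, or the intended reading is $a_n^2\ge (np_0+q_0)a_na_{n-1}+(np_1+q_1)a_{n-1}^2$. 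I will prove the version that follows from the quadratic form, since that is what $b_n\ge0$ actually says.

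Third, the PSD characterization. A real symmetric $2\times2$ matrix $\begin{pmatrix}x&y\\y&z\end{pmatrix}$ is PSD iff $x\ge0$, $z\ge0$ and $xz-y^2\ge0$. Here $x=1>0$, so it suffices that $xz\ge y^2$. Indeed, that inequality then forces $z\ge y^2\ge0$. The determinant condition reads
\[
-(np_1+q_1)\ge \tfrac14(np_0+q_0)^2,
\]
that is, $(np_0+q_0)^2\le -4(np_1+q_1)$. This immediately implies $np_1+q_1\le0$. The two conditions in \eqref{eq:Qpsd-d2} are therefore equivalent to PSD-ness, with the first one redundant but harmless.

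The only real obstacle is the sign discrepancy in \eqref{eq:bn-d2}. Everything else is a routine calculation.
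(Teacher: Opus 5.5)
Your proposal is correct and follows the paper's own route: compute $Q_n$ from \eqref{eq:Qn} using $M_n^\top e_1=(np_0+q_0,\,np_1+q_1)^\top$, expand the quadratic form, and apply the $2\times 2$ PSD criterion. The paper phrases that criterion as $\Tr(Q_n)\ge 0$ and $\det(Q_n)\ge 0$; you use $(Q_n)_{11}=1>0$ plus the determinant, which is equivalent and makes the redundancy of $np_1+q_1\le 0$ explicit. Your sign flag is also correct: the expansion gives $a_n^2\ge (np_0+q_0)a_na_{n-1}+(np_1+q_1)a_{n-1}^2$, which agrees with Example~\ref{ex:d2-quadratic}, so the $+(np_1+q_1)a_{n-1}^2$ on the left of \eqref{eq:bn-d2} is a typo in the statement that the paper's one-line proof passes over.
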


\begin{proof}
  The formula for $Q_n$ follows from~\eqref{eq:Qn} with
  $M_n^\top e_1 = (np_0+q_0,\, np_1+q_1)^\top$.
  Expanding the quadratic form gives~\eqref{eq:bn-d2}.
  Positive semi-definiteness of the $2\times 2$ matrix $Q_n$ is
  equivalent to $\Tr(Q_n) \ge 0$ and $\det(Q_n) \ge 0$; computing
  these yields~\eqref{eq:Qpsd-d2}.
\end{proof}

\section{Sufficient Conditions for $\infty$-Log-Concavity}
\label{sec:sufficient}
We are now ready to explore if it is possible to have a sufficient condition for a linear
recurrent Sequences with Linear Coefficients to be $\infty$-Log-Concavity
Prior to proceed we collect the three main sufficient conditions available in the
literature for general sequences

\medskip
\noindent\textbf{The $r$-factor criterion.}
Following~\cite{McNamaraSagan2010}, a sequence $(a_n)$ is
\emph{$r$-factor log-concave} if $a_n^2 \ge r\cdot a_{n+1}a_{n-1}$ for
all $n \ge 1$.

\begin{theorem}[\cite{McNamaraSagan2010,MedinaStratub2016}]
  \label{thm:rfactor}
  Let $r_0 = (3+\sqrt{5})/2$.
  If $(a_n)$ is $r_0$-factor log-concave, then it is $\infty$-log-concave.
\end{theorem}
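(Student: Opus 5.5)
The plan is to show that $r_0$-factor log-concavity propagates under $\mathcal{L}$: if $(a_n)$ is $r_0$-factor log-concave, then so is $\mathcal{L}(a_n)$. Iterating, every $\mathcal{L}^i(a_n)$ is $r_0$-factor log-concave. Since $r_0>1$, each iterate is in particular log-concave, so $\mathcal{L}^{i+1}(a_n)\ge 0$ for all $i$, which is $\infty$-log-concavity. Throughout I restrict to nonnegative sequences, as in McNamara--Sagan, so that $a_{n\pm1}\ge 0$ and the ratios below make sense. Sequences with zero terms are handled by a limiting or continuity argument, or by noting that the inequality is trivial where some factor vanishes.

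The key step is the one-step estimate. Write $b_n = a_n^2 - a_{n-1}a_{n+1}$; we want $b_n^2 \ge r_0\, b_{n-1}b_{n+1}$. Normalize by $a_n^2$ and introduce the ratios
\[
x = \frac{a_{n-1}a_{n+1}}{a_n^2},\qquad y=\frac{a_{n-2}a_n}{a_{n-1}^2},\qquad z=\frac{a_n a_{n+2}}{a_{n+1}^2}.
\]
The hypothesis gives $x,y,z\in[0,1/r_0]$. Then $b_n = a_n^2(1-x)$, $b_{n-1}=a_{n-1}^2(1-y)$ and $b_{n+1}=a_{n+1}^2(1-z)$. Also $a_{n-1}^2a_{n+1}^2 = x^2a_n^4$. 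Hence the desired inequality becomes
\[
(1-x)^2 \;\ge\; r_0\, x^2 (1-y)(1-z).
\]
The right side is maximized at $y=z=0$, so it suffices to show $(1-x)^2\ge r_0x^2$ for $0\le x\le 1/r_0$, that is, $1-x\ge \sqrt{r_0}\,x$, or $x\le 1/(1+\sqrt{r_0})$. Since $x\le 1/r_0$, we need $1/r_0\le 1/(1+\sqrt{r_0})$, i.e.\ $r_0\ge 1+\sqrt{r_0}$. Putting $s=\sqrt{r_0}$, this is $s^2-s-1\ge0$, i.e.\ $s\ge(1+\sqrt5)/2$. That gives $r_0\ge\varphi^2=(3+\sqrt5)/2$, which is exactly the stated constant; equality holds at the boundary.

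The main obstacle is the bookkeeping at the boundary index $n$, together with the treatment of zeros and signs. With the convention $a_{-1}=0$, the sequence $\mathcal{L}(a_n)$ needs its own boundary terms. I would check that the first inequality $b_0^2\ge r_0 b_{-1}b_1$ (or whichever endpoint convention the paper uses) holds trivially because a factor vanishes. For zero interior terms, the inequality $a_n^2\ge r_0 a_{n-1}a_{n+1}$ with $a_n=0$ forces $a_{n-1}a_{n+1}=0$. Then $b_n=0$, and the desired inequality reduces to $0\ge r_0 b_{n-1}b_{n+1}$, so a direct case check that one of $b_{n\pm1}$ vanishes (or is $\le 0$ appropriately) is needed. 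I expect this case check, rather than the algebraic core, to require the most care. The algebraic core is just the golden-ratio inequality above, which is also the content cited from~\cite{McNamaraSagan2010}.
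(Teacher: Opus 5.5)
Your argument is correct for nonnegative sequences and is essentially the McNamara--Sagan proof that the paper only cites; the paper gives no proof of its own. Two comments. First, the zero-term bookkeeping you expect to be hardest comes only from dividing by $a_n^2$, and it disappears if you argue directly: nonnegativity and log-concavity give $0\le b_{n\pm1}\le a_{n\pm1}^2$, while $b_n\ge (r_0-1)a_{n-1}a_{n+1}\ge 0$ and $(r_0-1)^2=r_0$ give $b_n^2\ge r_0\,a_{n-1}^2a_{n+1}^2\ge r_0\,b_{n-1}b_{n+1}$, with no division at all. Second, your restriction to nonnegative sequences is essential, not a convenience, because the statement as written here for arbitrary real sequences is false: $(-2,1,2,1,-2,0,0,\dots)$ is $r_0$-factor log-concave, yet $b_1=5$, $b_2=3$, $b_3=5$, so $b_2^2-b_1b_3=-16<0$.
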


Medina and Straub~\cite{MedinaStratub2016} showed that $r_0$ is sharp
in the sense that for any $\varepsilon > 0$ there exist
$\infty$-log-concave sequences that are not $(1+\varepsilon)$-factor
log-concave.

\medskip
\noindent\textbf{Real-rootedness.}
\begin{theorem}[\cite{Branden2011}]
  \label{thm:real-rooted}
  If the generating function $\sum_{n \ge 0} a_n\,x^n$ is an entire function
  whose zeros are all real and non-positive, then $(a_n)$ is
  $\infty$-log-concave.
\end{theorem}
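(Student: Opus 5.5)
The strategy is to use the Pólya--Schur / Laguerre--Pólya framework together with the fact that the operator $\mathcal{L}$ preserves real-rootedness in a suitable "polynomial" sense. The classical route, due to Brändén and anticipated by Fisk and by McNamara--Sagan, works through a polynomial version of $\mathcal{L}$. For a finite sequence $a_0,\dots,a_m$ with $p(x)=\sum_k a_k x^k$ having only real non-positive zeros, one shows the following statement:
\[
  \sum_k \bigl(a_k^2 - a_{k-1}a_{k+1}\bigr)x^k
\]
again has only real non-positive zeros. Iterating this gives $\mathcal{L}^i(a_k)\ge 0$ for every $i$, because real-rootedness with non-positive zeros forces nonnegative coefficients (up to an overall sign, fixed by normalizing $a_0>0$). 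The entire-function case is then obtained from the finite case by approximation.

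\textbf{Step 1: the polynomial case.} Write $p(x)=\sum_k a_k x^k$ with only real non-positive zeros. The standard trick normalizes coefficients by factorials. Set $a_k = c_k/k!$, or better work with the sequence $a_k\binom{m}{k}^{-1}$, to reduce to Brändén's theorem. That theorem states that if $\sum_k \binom{m}{k}c_k x^k$ has real non-positive zeros, then so does the following polynomial:
\[
  \sum_k \binom{m}{k}\bigl(c_k^2-c_{k-1}c_{k+1}\bigr)x^k.
\]
I would prove this via the symmetrization/multi-affine method, viewing $c_k$ as elementary symmetric means of the zeros. Brändén uses a stability-preserving linear operator on multi-affine polynomials in two sets of variables, the Lieb--Sokal / Borcea--Brändén characterization, applied to $q(x)q(y)$ and then diagonalized. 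The output has the required form because $c_k^2-c_{k-1}c_{k+1}$ arises as a bilinear "Wronskian-type" combination. This step is the main obstacle. The key check is that the operator $T$ with $T(x^k y^l)$ depending on $(k,l)$ through the Turán pattern has a stable symbol, and I would verify stability of the symbol directly via Grace--Walsh--Szegő.

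\textbf{Step 2: pass between the binomially normalized and raw forms.} Multiplying coefficients by $\binom{m}{k}$ or by $1/k!$ preserves real-rootedness with non-positive zeros, by Pólya--Schur multiplier sequences and Laguerre's theorem. Positivity of the Turán differences is unaffected in sign in the direction we need, since $c_k^2-c_{k-1}c_{k+1}\ge 0$ combined with the log-concavity of $\binom{m}{k}$ yields the inequality for $a_k$. I would iterate inside the normalized world and only translate to $a_k$ at the end.

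\textbf{Step 3: entire functions.} An entire function with only real non-positive zeros and nonnegative Taylor coefficients lies in the Laguerre--Pólya class, of the form $Ce^{\sigma x}x^r\prod(1+x/x_j)$ with $\sigma\ge 0$ and $\sum 1/x_j<\infty$. It is a locally uniform limit of polynomials $p_m$ with real non-positive zeros, for instance $(1+\sigma x/m)^m\prod_{j\le m}(1+x/x_j)$. Their coefficients converge to $a_k$. Each $\mathcal{L}^i$ applied to the coefficients of $p_m$ is nonnegative by Steps 1--2. Since $\mathcal{L}^i(a)_k$ depends continuously on finitely many coefficients, letting $m\to\infty$ gives $\mathcal{L}^i(a_k)\ge 0$ for all $i$ and $k$, which is the claimed $\infty$-log-concavity.
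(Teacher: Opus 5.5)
The paper gives no proof of this statement; it only cites \cite{Branden2011}. Your outline (polynomial case, then a limit) is the standard route behind that citation. However, Step 3 rests on a false claim, and it cannot be repaired under the stated hypotheses. An entire function with only real non-positive zeros and nonnegative Taylor coefficients need \emph{not} be of the form $Cx^re^{\sigma x}\prod_j(1+x/x_j)$. Hadamard factorization leaves an arbitrary zero-free factor $e^{g(x)}$, and nothing forces $g$ to be linear. Concretely:
\begin{itemize}
\item $e^{x^2}$ has no zeros and coefficients $1,0,1,0,\tfrac12,\dots$, so $a_1^2-a_0a_2=-1<0$;
\item $(1+x)e^{x^2}$, whose only zero is $-1$, has coefficients $1,1,1,1,\tfrac12,\tfrac12,\dots$, so $a_4^2-a_3a_5=-\tfrac14<0$.
\end{itemize}
So the statement, read literally, is false. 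You also silently added nonnegativity of the coefficients, which is not a hypothesis. The hypothesis under which the conclusion holds is that $\sum a_nx^n$ lies in the Laguerre--P\'olya class of type I, i.e.\ $f(x)=Cx^re^{\sigma x}\prod_j(1+x/x_j)$ with $\sigma\ge 0$, $x_j>0$ and $\sum_j 1/x_j<\infty$. Equivalently, by Aissen--Schoenberg--Whitney--Edrei, $(a_n)$ is a P\'olya frequency sequence with entire generating function. You have to assume this, not derive it. Once you do, your approximation argument is correct: the coefficients of $p_m$ converge to those of $f$, and $\mathcal{L}^i(a)_k$ is a polynomial in $a_{k-i},\dots,a_{k+i}$.

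Step 2 also fails as written. Iterating the binomially normalized statement gives $\mathcal{L}^i(c)\ge 0$ for $c_k=a_k/\binom{m}{k}$, i.e.\ $\infty$-log-concavity of $c$, not of $a$. Log-concavity of $\binom{m}{k}$ transfers only the first level, via $\mathcal{L}(a)_k=\binom{m}{k}^2\mathcal{L}(c)_k+\bigl(\binom{m}{k}^2-\binom{m}{k-1}\binom{m}{k+1}\bigr)c_{k-1}c_{k+1}$. For $i\ge 2$ there is no such transfer, because $\mathcal{L}$ does not commute with termwise multiplication. The detour is unnecessary: the unnormalized statement you wrote at the outset is the form Br\"and\'en proved (the Fisk--McNamara--Sagan--Stanley conjecture), and it iterates directly. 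After $i$ applications the polynomial still has degree $m$, with leading coefficient $a_m^{2^i}>0$. Real non-positive zeros then force all its coefficients to be nonnegative, which also avoids your normalization $a_0>0$, unavailable when $a_0=0$. Citing that theorem for Step 1 is legitimate here, since the paper does no more than cite it. That is better than sketching a stability argument whose key verification you leave open.
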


\medskip
\noindent\textbf{P-recursive asymptotics.}
For a P-recursive sequence of order $d$, the limiting recurrence as
$n \to \infty$ has characteristic polynomial
\begin{equation}
  \label{eq:char-poly}
  \chi(\lambda) \;=\; \lambda^d - p_0\lambda^{d-1}
                  - p_1\lambda^{d-2} - \cdots - p_{d-1}.
\end{equation}

\begin{theorem}
  \label{thm:infty-asymptotic}
  Suppose $\chi(\lambda)$ has all real roots, $a_n > 0$ for all
  $n \ge 0$, and $a_n \sim C\,n^\alpha\,\Lambda_1^n$ for some
  $C > 0$, $\alpha \in \mathbb{R}$, and dominant root $\Lambda_1$.
  Then $(a_n)$ is $\infty$-log-concave for all $n \ge N$, for some
  computable $N$.
\end{theorem}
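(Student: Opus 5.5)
The plan is to reduce $\infty$-log-concavity for $n \ge N$ to the $r_0$-factor criterion of Theorem~\ref{thm:rfactor}, applied to the tail of the sequence. We therefore study the Tur\'{a}n ratio
\[
  \rho_n \;=\; \frac{a_{n+1}a_{n-1}}{a_n^2}
\]
and show that $\rho_n \le 1/r_0$ for all $n \ge N$. By Theorem~\ref{thm:rfactor} this makes the shifted sequence $(a_{n})_{n\ge N-1}$ $\infty$-log-concave.

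The first step is to turn the asymptotic hypothesis into an expansion of $\rho_n$. Since $(a_n)$ is P-recursive with linear coefficients and $\chi$ has real roots with $\Lambda_1$ dominant, the Birkhoff--Trjitzinsky theory gives an asymptotic series $a_n = C n^\alpha \Lambda_1^n\,(1 + c_1/n + c_2/n^2 + O(n^{-3}))$. We would take this refinement of $a_n \sim C n^\alpha \Lambda_1^n$ as implicit in the hypothesis. It yields
\[
  \rho_n \;=\; \frac{(1+1/n)^\alpha (1-1/n)^\alpha}{1}\bigl(1 + O(n^{-2})\bigr) \;=\; 1 - \frac{\alpha}{n^2} + O(n^{-3}).
\]
This exposes the main obstacle. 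For $n^\alpha$-type growth, $\rho_n \to 1$, so the factor $1/r_0 \approx 0.38$ cannot be reached directly. In fact, for genuinely geometric-times-polynomial sequences with $\alpha \le 0$, log-concavity itself may fail. The argument therefore has to use $\mathcal{L}$ itself rather than only the ratio.

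The second step handles this by iteration. Lemma~\ref{lem:quadratic} shows that $b_n = \mathbf{v}_n^\top Q_n \mathbf{v}_n$, and substituting the expansion gives
\[
  b_n \;=\; a_n^2(1-\rho_n) \;\sim\; \alpha C^2 n^{2\alpha-2}\Lambda_1^{2n}.
\]
So $\mathcal{L}(a_n)$ is again of the form $C' n^{\alpha'}\Lambda'^n$, with $\alpha' = 2\alpha - 2$, $\Lambda' = \Lambda_1^2$, and $C' = \alpha C^2$, which is positive when $\alpha>0$. Moreover $b_n$ is again P-recursive, since products of holonomic sequences are holonomic, so the same asymptotic-series structure persists. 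By induction on $i$, $\mathcal{L}^i(a_n) \sim C_i n^{\alpha_i}\Lambda_1^{2^i n}$, with $C_{i+1} = \alpha_i C_i^2$ and $\alpha_{i+1} = 2\alpha_i - 2$. Positivity of every iterate for large $n$ then requires $\alpha_i > 0$ for all $i$, that is, $\alpha > 2$, which is the fixed point of $\alpha \mapsto 2\alpha - 2$. This suggests that the theorem as stated needs an extra hypothesis, or that the intended meaning is really "log-concave eventually".

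The hardest point is making $N$ uniform in $i$, since each iterate a priori needs its own threshold $N_i$. Our first attempt would be to combine the two steps: once $\alpha_i$ becomes large (it grows like $2^i$ when $\alpha>2$), the ratio $1 - \alpha_i/n^2$ drops below $1/r_0$ for $n$ of order $\sqrt{\alpha_i}$. Consider the first $i_0$ at which the tail of $\mathcal{L}^{i_0}(a)$ is $r_0$-factor log-concave on $n \ge N$. From that point Theorem~\ref{thm:rfactor} takes over for all higher iterates, and $N = \max_{i \le i_0} N_i$ is computable from the asymptotic constants. The remaining difficulty is controlling the $O(n^{-3})$ error terms uniformly through finitely many iterations. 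We would do this with explicit error bounds from the recurrence, obtained by bounding $\Pi_n$ as in Proposition~\ref{prop:initial-data}.
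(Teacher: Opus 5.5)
Your remark that the statement cannot hold as written is correct, and it is a point the paper's own proof glosses over. If $\alpha<0$ then $\tau_n=1+|\alpha|/n^2+O(n^{-3})>1$ eventually, so $b_n<0$ on a tail. For $\alpha=0$, the sequence $a_n=2^n+1$ (positive, $a_n\sim 2^n$, roots $2,1$) has $b_n=-2^{n-1}$ for all $n$. For $0<\alpha<2$, the exponents $\alpha_i=2+2^i(\alpha-2)$ eventually turn negative, so generically some iterate is eventually negative.

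The paper's proof silently assumes $\alpha>0$. For $\alpha<2$ it controls only the finitely many levels with $\alpha^{(k)}>0$, and then concludes for all $i$ via ``$N=\max_k N_k$ over all relevant levels.'' So it does not prove the stated theorem either.

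Three corrections to your version:
\begin{enumerate}[label=\normalfont(\arabic*)]
  \item The fixed point $\alpha=2$ is allowed, since then $\alpha_i\equiv 2$ (e.g.\ $\mathcal{L}(n^2)=2n^2-1$ and $\mathcal{L}^2(n^2)=8n^2$). The generic condition is therefore $\alpha\ge 2$, not $\alpha>2$.
  \item Your necessity claim is only generic, because $C_{i+1}=\alpha_iC_i^2$ vanishes when some $\alpha_i=0$. For example, $a_n=n+1$ has $\alpha=1$, yet $\mathcal{L}(a)=1$ and $\mathcal{L}^2(a)=0$.
  \item In your expansion of $\rho_n$, the correction factor must be shown to be $1+O(n^{-3})$. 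It is $\exp$ of the second difference of $\log(1+c_1/n+\cdots)$, which is $O(n^{-3})$. A mere $1+O(n^{-2})$ would swamp the decisive $-\alpha/n^2$ term.
\end{enumerate}

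The genuine gap is the uniformity of $N$ in $i$. This is the same step the paper leaves unjustified, and your proposed fix cannot close it. In the remaining case $\alpha\ge 2$, for each fixed $i$ you have $\mathcal{L}^i(a)_n\sim C_in^{\alpha_i}\Lambda_1^{2^in}$ with a full expansion, so its Tur\'an ratio tends to $1$ as $n\to\infty$. Consequently no tail of any iterate is $r_0$-factor log-concave, and Theorem~\ref{thm:rfactor} is never triggered.

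The inequality $1-\alpha_i/n^2\le 1/r_0$ holds for $n\lesssim\sqrt{\alpha_i}$, that is, below a threshold rather than on a tail. In that range the remainder $O_i(n^{-3})$, whose constant depends on $i$, is not under control anyway. So the index $i_0$ you invoke does not exist, and you are left with $\sup_i N_i$, for which you have no bound.

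What your argument (and the paper's) actually delivers is a non-uniform statement: if $\alpha\ge 2$ and $a_n$ has a full asymptotic expansion, then each $\mathcal{L}^i(a)$ is eventually positive, with a threshold $N_i$ depending on $i$. A uniform $N$ would need a tail property that is preserved by $\mathcal{L}$ and compatible with $\tau_n\to 1$. The $r_0$-factor condition is preserved, but it forces $\tau_n\le 1/r_0<1$, so it is the wrong invariant here.
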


\begin{proof}
  By the asymptotic theory of P-recursive sequences
  \cite{BirkhoffTrjitzinsky1933,WimpZeilberger1985},
  we may write $a_n = C\Lambda_1^n n^\alpha(1 + c_1 n^{-1} + O(n^{-2}))$
  for constants $C>0$, $\alpha\in\mathbb{R}$, $c_1\in\mathbb{R}$.
  A direct computation using $(1\pm 1/n)^\alpha = 1\pm\alpha/n +
  \frac{\alpha(\alpha-1)}{2n^2}+O(n^{-3})$ gives
  \[
    \frac{a_{n\pm 1}}{a_n}
      = \Lambda_1^{\pm 1}\!\Bigl(1 \pm \tfrac{\alpha}{n}
        + O(n^{-2})\Bigr),
  \]
  and therefore (the sub-leading coefficient $c_1$ cancels to this order)
  \begin{equation}
    \label{eq:tau-asymp}
    \tau_n
      = \Bigl(1-\tfrac{1}{n^2}\Bigr)^\alpha + O(n^{-3})
      = 1 - \frac{\alpha}{n^2} + O(n^{-4}).
  \end{equation}
  For $\alpha > 0$, equation~\eqref{eq:tau-asymp} gives $\tau_n < 1$ for
  all large~$n$, so $b_n = a_n^2(1-\tau_n) > 0$ for all $n \ge N_1$.
  Moreover,
  \[
    b_n \;\sim\; \alpha\,C^2\,n^{2\alpha-2}\,\Lambda_1^{2n}.
  \]
  Set $\alpha^{(0)}=\alpha$ and define $\alpha^{(k+1)} = 2\alpha^{(k)}-2$.
  Applying~\eqref{eq:tau-asymp} to the $k$-th iterate
  $\mathcal{L}^k(a_n)\sim D_k n^{\alpha^{(k)}}(\Lambda_1^{2^k})^n$ gives
  \[
    \tau_n^{(k)} = 1 - \frac{\alpha^{(k)}}{n^2}+O(n^{-4}).
  \]
  The explicit solution is $\alpha^{(k)} = 2 + 2^k(\alpha-2)$.
  For $\alpha > 0$ one checks that $\alpha^{(k)}>0$ for every $k$ with
  $k < \log_2\!\bigl(\tfrac{2}{2-\alpha}\bigr)$ (when $\alpha<2$) or for all
  $k\ge 0$ (when $\alpha\ge 2$).
  In each case $\tau_n^{(k)}<1$ for large~$n$, so $\mathcal{L}^{k+1}(a_n)>0$
  for all $n \ge N_{k+1}$.
  Taking $N=\max_k N_k$ (the finite maximum over all relevant levels)
  yields $\mathcal{L}^i(a_n)\ge 0$ for all $n\ge N$ and all $i\ge 1$.
\end{proof}

\section{Tight Criteria for $\infty$-Log-Concavity}
\label{sec:tight}

The sufficient conditions of Section~\ref{sec:sufficient} are not in
general necessary. In this section we identify three special cases
admitting tight (necessary and sufficient) criteria.

\subsection{The key structural obstacle}

Determining a necessary and sufficient condition for
$\infty$-log-concavity requires, in particular, understanding the
sign of $b_n = \mathcal{L}(a_n)$ and of every subsequent iterate
$\mathcal{L}^i(a_n)$.
For a general P-recursive sequence, each iterate $\mathcal{L}^i(a_n)$
is itself P-recursive of higher complexity, and deciding positivity
for P-recursive sequences is not known to be decidable in
general~\cite{KauersHolonomicFunctions}.
Our three special cases circumvent this obstacle through structural
properties of the recurrence.

\subsection{Second-order sequences with constant coefficients}
\label{ssec:constant}

Consider the second-order constant-coefficient recurrence
\begin{equation}
  \label{eq:const-rec}
  a_{n+1} = \alpha\,a_n + \beta\,a_{n-1},
  \qquad a_0 = a,\quad a_1 = b,
\end{equation}
with characteristic equation $\lambda^2 - \alpha\lambda - \beta = 0$
and roots $\lambda_{1,2} = (\alpha \pm \sqrt{\alpha^2 + 4\beta})/2$.
When $\lambda_1 \ne \lambda_2$, the general solution is
$a_n = A\lambda_1^n + B\lambda_2^n$ with
\[
  A = \frac{b - a\lambda_2}{\lambda_1 - \lambda_2}, \qquad
  B = \frac{a\lambda_1 - b}{\lambda_1 - \lambda_2}.
\]

\begin{theorem}
  \label{thm:tight-constant}
  Let $(a_n)$ satisfy~\eqref{eq:const-rec} with $\lambda_1 \ne \lambda_2$.
  The following hold.
  \begin{enumerate}[label=\normalfont(\roman*)]
    \item
      $\mathcal{L}(a_n) = b_n = -AB\,(\lambda_1\lambda_2)^{n-1}\,
        (\lambda_1 - \lambda_2)^2$; in particular, $(b_n)$ is a
      geometric sequence.
    \item
      $\mathcal{L}^2(a_n) = 0$ for all $n \ge 2$.
    \item
      $(a_n)$ is $\infty$-log-concave if and only if $(a_n)$ is
      log-concave.
    \item
      Log-concavity holds if and only if
      \begin{equation}
        \label{eq:tight-iff}
        \beta < 0 \qquad\text{and}\qquad AB \le 0,
      \end{equation}
      or $AB = 0$.
  \end{enumerate}
\end{theorem}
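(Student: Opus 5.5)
The plan is to compute $b_n$ in closed form from the Binet-type formula $a_n = A\lambda_1^n + B\lambda_2^n$. Parts (ii) and (iii) then follow structurally, and part (iv) becomes a sign analysis of a geometric sequence. Lemma~\ref{lem:quadratic} is not needed; the explicit solution is more direct.

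For (i), substitute $a_{n\pm1} = A\lambda_1^{n\pm1}+B\lambda_2^{n\pm1}$ into $a_n^2 - a_{n+1}a_{n-1}$. The pure terms $A^2\lambda_1^{2n}$ and $B^2\lambda_2^{2n}$ cancel exactly. The cross terms give
\[
2AB(\lambda_1\lambda_2)^n - AB(\lambda_1\lambda_2)^{n-1}(\lambda_1^2+\lambda_2^2) = -AB(\lambda_1\lambda_2)^{n-1}(\lambda_1-\lambda_2)^2 .
\]
So $b_n = c\,r^{n}$ with $r=\lambda_1\lambda_2=-\beta$ and $c = -AB(\lambda_1-\lambda_2)^2/r$. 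When $r=0$, we instead read the formula as $b_1=-AB(\lambda_1-\lambda_2)^2$ and $b_n=0$ for $n\ge2$.

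For (ii), any geometric sequence $g_n=c r^n$ satisfies $g_n^2-g_{n+1}g_{n-1}=c^2r^{2n}-c^2r^{2n}=0$. Hence $\mathcal{L}^2(a_n)=0$ wherever three consecutive terms of $(b_n)$ are defined, i.e.\ for $n\ge2$. The degenerate case $\beta=0$ is checked directly. For (iii), $\mathcal{L}^i(a_n)=\mathcal{L}^{i-2}(0)=0\ge0$ for all $i\ge2$. Thus $\infty$-log-concavity reduces to the single condition $\mathcal{L}(a_n)\ge0$, which is log-concavity.

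For (iv), the sign of $b_n$ is the sign of $-AB(\lambda_1-\lambda_2)^2(-\beta)^{n-1}$. I would first treat real distinct roots, $\alpha^2+4\beta>0$, where $(\lambda_1-\lambda_2)^2>0$, and split on the sign of $\beta$.
\begin{itemize}
\item If $AB=0$, then $b_n\equiv0$ and the sequence is log-concave.
\item If $AB\neq0$ and $\beta>0$, the factor $(-\beta)^{n-1}$ alternates in sign, so $b_n$ takes both signs and log-concavity fails.
\item If $AB\neq0$ and $\beta<0$, then $(-\beta)^{n-1}>0$, so $b_n\ge0$ for all $n$ if and only if $AB\le0$.
\item If $\beta=0$, only $b_1=-AB(\lambda_1-\lambda_2)^2$ matters, giving $AB\le0$.
\end{itemize}
This reproduces~\eqref{eq:tight-iff} together with the alternative $AB=0$. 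The boundary case $\beta=0$ needs to be folded into the statement explicitly, with $\beta\le0$ allowed when $AB\le 0$.

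The main obstacle is the complex-root case $\alpha^2+4\beta<0$, which forces $\beta<0$. There $B=\bar A$, so $AB=|A|^2\ge0$, while $(\lambda_1-\lambda_2)^2=-|\lambda_1-\lambda_2|^2<0$. The closed form then gives $b_n=|A|^2|\lambda_1-\lambda_2|^2\beta^{\,n-1}(-1)^{n-1}\cdot(-1)^{n-1}$, which is nonnegative for all $n$, so the sequence is always log-concave. Read literally, ``$AB\le0$'' is therefore the wrong condition in this regime. My plan is to state the criterion invariantly as $-AB(\lambda_1-\lambda_2)^2\ge0$ together with $\beta\le0$, or $AB=0$. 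I would then remark that this coincides with~\eqref{eq:tight-iff} precisely when the roots are real, and handle the complex case separately (it is automatically log-concave) rather than force it into the stated form.
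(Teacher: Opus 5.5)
Your route for (i)--(iii) is the paper's: expand the Binet form, note that $\mathcal{L}$ annihilates any geometric sequence, and conclude that all iterates past the first vanish. You also handle $\beta=0$, where the paper's proof of (ii) divides by $\mu=\lambda_1\lambda_2$.

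For (iv) the paper argues $\sgn(b_n)=-\sgn(AB)\,\sgn(\mu^{n-1})$ and splits into the cases $\beta<0$, $\beta>0$, $AB=0$. This silently drops the factor $(\lambda_1-\lambda_2)^2$, and it never treats $\beta=0$ with $AB\neq0$. Your two caveats are correct, and they show that (iv) as literally stated is false, not merely unproved.
\begin{itemize}
\item For $a_{n+1}=a_n$ with $(a_0,a_1)=(1,2)$ one has $\beta=0$, $AB=-2$, $b_1=2$ and $b_n=0$ for $n\ge2$. So the sequence is log-concave although~\eqref{eq:tight-iff} fails.
\item For $a_{n+1}=a_n-a_{n-1}$ with $(a_0,a_1)=(1,1)$ the roots are $e^{\pm i\pi/3}$ and $AB=|A|^2=1/3>0$, yet $b_n\equiv1$.
\end{itemize}
So your invariant criterion is what is actually provable. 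The paper's form is valid only under the unstated hypothesis of real roots with $\beta\neq0$.

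There is one slip to fix. Your complex-case display $|A|^2|\lambda_1-\lambda_2|^2\beta^{n-1}(-1)^{n-1}\cdot(-1)^{n-1}$ equals $|A|^2|\lambda_1-\lambda_2|^2\beta^{n-1}$, which alternates in sign because $\beta<0$. The correct value is $|A|^2|\lambda_1-\lambda_2|^2(-\beta)^{n-1}\ge0$, and that is what your conclusion needs.

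Finally, your invariant quantity $-AB(\lambda_1-\lambda_2)^2$ is simply $b_1=a_1^2-\alpha a_0a_1-\beta a_0^2$. The recurrence gives directly
\[
b_{n+1}=a_{n+1}(a_{n+1}-\alpha a_n)-\beta a_n^2=-\beta\,b_n \qquad (n\ge1),
\]
so $b_n=b_1(-\beta)^{n-1}$. This yields (ii)--(iv) in the root-free form ``log-concave iff $b_1\ge0$ and either $\beta\le0$ or $b_1=0$''. It needs no real/complex or $\beta=0$ case split, and it does not require $\lambda_1\neq\lambda_2$.
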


\begin{proof}
  \textbf{(i).} Expanding directly:
  \begin{align*}
    b_n &= (A\lambda_1^n + B\lambda_2^n)^2
           - (A\lambda_1^{n+1} + B\lambda_2^{n+1})
             (A\lambda_1^{n-1} + B\lambda_2^{n-1}) \\
        &= 2AB(\lambda_1\lambda_2)^n
           - AB\,(\lambda_1\lambda_2)^{n-1}(\lambda_1^2 + \lambda_2^2)\\
        &= AB\,(\lambda_1\lambda_2)^{n-1}
           \bigl[2\lambda_1\lambda_2 - (\lambda_1^2 + \lambda_2^2)\bigr]
         = -AB\,(\lambda_1\lambda_2)^{n-1}(\lambda_1 - \lambda_2)^2.
  \end{align*}

  \textbf{(ii).} Write $b_n = K\,\mu^n$ with
  $K = -AB\,(\lambda_1\lambda_2)^{-1}(\lambda_1-\lambda_2)^2$ and
  $\mu = \lambda_1\lambda_2$.
  Then
  \[
    \mathcal{L}(b_n) = b_n^2 - b_{n+1}b_{n-1}
    = K^2\mu^{2n} - K\mu^{n+1}\cdot K\mu^{n-1}
    = K^2\mu^{2n} - K^2\mu^{2n} = 0.
  \]
  Hence $\mathcal{L}^2(a_n) = 0$ for all $n \ge 2$.

  \textbf{(iii).} By (ii), $\mathcal{L}^i(a_n) = 0 \ge 0$ for all
  $i \ge 2$.
  Thus $\infty$-log-concavity reduces to $b_n = \mathcal{L}(a_n) \ge 0$.

  \textbf{(iv).} From (i), $\sgn(b_n) = -\sgn(AB)\cdot\sgn(\mu^{n-1})$
  where $\mu = \lambda_1\lambda_2 = -\beta$ by Vieta's formulas.
  \begin{itemize}
    \item
      If $\beta < 0$: $\mu = -\beta > 0$, so $\mu^{n-1} > 0$ for all
      $n$. Then $b_n \ge 0$ for all $n$ iff $AB \le 0$.
    \item
      If $\beta > 0$: $\mu = -\beta < 0$, so $\mu^{n-1}$ alternates
      in sign, forcing $b_n$ to alternate (when $AB \ne 0$);
      log-concavity fails.
    \item
      If $AB = 0$: $b_n = 0$ trivially.
  \end{itemize}
\end{proof}

\begin{corollary}
  \label{cor:cone}
  Under the conditions of Theorem~\ref{thm:tight-constant} with
  $\beta < 0$, the set of initial data $(a,b) \in \mathbb{R}^2$ for
  which $(a_n)$ is $\infty$-log-concave is the closed convex cone
  \[
    \mathcal{C}_\infty
    \;=\; \bigl\{(a,b) : (b - a\lambda_2)(a\lambda_1 - b) \le 0\bigr\},
  \]
  which is the complement of the open cone $b/a \in (\lambda_2,\lambda_1)$.
  In particular, $\mathcal{C}_\infty$ is a union of two closed half-planes
  through the origin with directions $\lambda_1$ and $\lambda_2$.
\end{corollary}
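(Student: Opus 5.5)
The plan is to read the corollary directly off parts (iii) and (iv) of Theorem~\ref{thm:tight-constant}. I will assume throughout that the roots $\lambda_1 \ne \lambda_2$ are real, i.e.\ $\alpha^2+4\beta>0$. This is implicit in the description via $b/a$ and in the orientation $\lambda_2<\lambda_1$.

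\textbf{Step 1 (reduction).} By Theorem~\ref{thm:tight-constant}(iii), $\infty$-log-concavity is equivalent to log-concavity. With $\beta<0$, we have $\mu=\lambda_1\lambda_2=-\beta>0$, so every factor $\mu^{n-1}$ in part (i) is positive. Also $(\lambda_1-\lambda_2)^2>0$. Hence by (i), $b_n\ge 0$ for all $n$ if and only if $AB\le 0$, which is exactly (iv) in this case, and it includes the degenerate case $AB=0$.

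\textbf{Step 2 (translation to initial data).} Substitute the explicit formulas for $A$ and $B$:
\[
AB=\frac{(b-a\lambda_2)(a\lambda_1-b)}{(\lambda_1-\lambda_2)^2}.
\]
The denominator is positive, so $AB\le 0$ if and only if $(b-a\lambda_2)(a\lambda_1-b)\le 0$. This gives the stated description of $\mathcal{C}_\infty$. The set is closed because it is the preimage of $(-\infty,0]$ under a continuous quadratic form. It is a cone because it is invariant under $(a,b)\mapsto t(a,b)$ for every real $t$, including $t<0$, since the form is homogeneous of degree two.

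\textbf{Step 3 (geometry of the complement).} The complement is where the two linear forms $b-a\lambda_2$ and $a\lambda_1-b$ are both strictly positive or both strictly negative. I will check three cases. If $a>0$, both are positive exactly when $\lambda_2<b/a<\lambda_1$, and both negative is impossible because it would force $\lambda_1<\lambda_2$. If $a<0$, dividing by $a$ reverses the inequalities, and again the open condition is $b/a\in(\lambda_2,\lambda_1)$. If $a=0$, the form equals $-b^2\le 0$, so the whole $b$-axis lies in $\mathcal{C}_\infty$. Hence the complement is the open double wedge between the lines $b=\lambda_2 a$ and $b=\lambda_1 a$, as claimed.

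\textbf{Main obstacle.} The real difficulty is not the computation but the phrase ``closed convex cone'' and the ``two closed half-planes'' description. The region $\{Q\le 0\}$ of an indefinite binary quadratic form is a closed double wedge: the union of two opposite closed wedges bounded by the lines of slopes $\lambda_1$ and $\lambda_2$. It is convex only in degenerate situations. For example, take $\lambda_1=2$, $\lambda_2=1$. Then $(1,0)$ and $(-1,3)$ both lie in the set, but their midpoint $(0,1.5)$ does lie in it as well, whereas $(1,0)$ and $(1,3)$ lie in it while their midpoint $(1,1.5)$ has $b/a\in(1,2)$ and does not. So in writing the proof I would establish the characterization of Steps 1--3 rigorously. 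I would then state the geometric conclusion as ``a closed, symmetric (double) cone, the complement of the open double wedge $b/a\in(\lambda_2,\lambda_1)$.'' I would flag that the words ``convex'' and ``half-planes'' need to be weakened, since the counterexample above shows the set is not convex.
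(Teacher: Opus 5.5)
Your Steps 1--2 are exactly the paper's proof: reduce to log-concavity via Theorem~\ref{thm:tight-constant}(iii), use $\mu=-\beta>0$, and substitute the formulas for $A$ and $B$. You also share the paper's tacit restriction to real roots, which is harmless, since for a complex pair the defining form equals $-|b-a\lambda_1|^2\le 0$ and log-concavity holds for all initial data. The paper never argues the geometric claims, and your objection to them is correct. Your second pair $(1,0),(1,3)$ shows $\mathcal{C}_\infty$ is not convex (your first pair is not a counterexample). In fact $\mathcal{C}_\infty$ is the closed double wedge $\{b\ge\max(\lambda_1a,\lambda_2a)\}\cup\{b\le\min(\lambda_1a,\lambda_2a)\}$ from your Step~3, not a union of two half-planes.
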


\begin{proof}
  From Theorem~\ref{thm:tight-constant}(iii), $\mathcal{C}_\infty$
  equals the log-concavity cone.
  The equivalence $AB \le 0 \Leftrightarrow (b-a\lambda_2)(a\lambda_1-b)\le 0$
  follows by substituting the explicit formulas for $A$ and $B$.
\end{proof}

\subsection{Sequences fixed by $\mathcal{L}$}
\label{ssec:fixedpoint}

\begin{definition}
  A sequence $(a_n)$ is a \emph{fixed point of $\mathcal{L}$} if
  $\mathcal{L}(a_n) = a_n$.
\end{definition}

\begin{theorem}[\cite{MedinaStratub2016}]
  \label{thm:fixed-char}
  A nontrivial sequence $(a_n)$ is fixed by $\mathcal{L}$ if and only
  if it satisfies the four-term recurrence
  $a_{n+2}\,a_{n-1} - a_{n+1}\,a_n = a_{n+1} - a_n$,
  equivalently if $a_n = \cos(n\theta)$ or $a_n = \cosh(n\theta)$ for
  some $\theta \in \mathbb{R}$.
\end{theorem}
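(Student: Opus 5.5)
Since the statement is quoted from~\cite{MedinaStratub2016}, the plan is to reproduce its argument in the paper's notation, in two steps. The first step is the equivalence between $\mathcal{L}(a_n)=a_n$ and the four-term recurrence. The second step solves that recurrence explicitly.

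\textbf{Step 1 (fixed point $\Leftrightarrow$ four-term recurrence).} Write the fixed-point condition as $a_n^2-a_{n+1}a_{n-1}=a_n$ for every $n\ge 1$, and write down the same identity at the index $n+1$. For the forward direction, I would combine these two consecutive identities so that the quadratic terms $a_n^2$ and $a_{n+1}^2$ are eliminated. Concretely, I would multiply the relation at $n$ by $a_{n+1}$ and the relation at $n+1$ by $a_n$, subtract, and divide by the common factor that appears. The aim is to land exactly on
\[
a_{n+2}a_{n-1}-a_{n+1}a_n=a_{n+1}-a_n .
\]
For the converse, I would read the four-term relation as the statement that the defect $\delta_n:=a_n^2-a_{n+1}a_{n-1}-a_n$ satisfies a first-order homogeneous relation of the form $\delta_{n+1}=\rho_n\delta_n$. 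Together with $\delta_1=0$, which is where nontriviality and the choice of initial data enter, this gives $\delta_n\equiv 0$ by induction. Throughout this step I would keep track of the indices at which some $a_n$ vanishes, since divisions occur there.

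\textbf{Step 2 (solving the recurrence).} I would show that the four-term relation forces the ratio $c_n:=(a_{n+1}+a_{n-1})/a_n$ to be independent of $n$, by checking that $c_{n+1}-c_n$ is a multiple of the left-hand side minus the right-hand side of the recurrence. This reduces $(a_n)$ to a second-order constant-coefficient recurrence $a_{n+1}=c\,a_n-a_{n-1}$, which is exactly the setting of Subsection~\ref{ssec:constant} with $\alpha=c$ and $\beta=-1$. Its characteristic roots have product $1$: they are $e^{\pm i\theta}$ when $|c|\le 2$, and $e^{\pm\theta}$ when $|c|\ge 2$ (up to a sign that can be absorbed into $\theta$). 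The general solution is therefore a combination of $\cos(n\theta),\sin(n\theta)$, or of $\cosh(n\theta),\sinh(n\theta)$. I would then feed this back into the fixed-point condition at small $n$, using the closed form from Theorem~\ref{thm:tight-constant}(i). Since $\lambda_1\lambda_2=1$, that formula says $\mathcal{L}(a_n)$ is constant. Comparing this constant with $a_n$ pins down the free amplitude and phase, which should single out the pure $\cos(n\theta)$ and $\cosh(n\theta)$ families.

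\textbf{Main obstacle.} I expect the hardest step to be this final normalization, that is, matching the constant value of $\mathcal{L}(a_n)$ against $a_n$ itself. This is where the precise convention of~\cite{MedinaStratub2016} matters: its indexing, its normalization of $\mathcal{L}$, and how it handles the degenerate case $\lambda_1=\lambda_2$ ($c=\pm 2$). I would first verify the identity directly on $a_n=\cos(n\theta)$ using the product-to-sum formula for $\cos((n+1)\theta)\cos((n-1)\theta)$. I would adjust the bookkeeping to the cited convention before writing the general argument. The zero-division issues from Step 1 would be handled the same way, by treating the finitely many vanishing indices separately through continuity in $\theta$.
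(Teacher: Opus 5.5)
Your proposal cannot be completed, because the statement as quoted is false. The paper itself gives no proof, only the citation, and the steps you defer are exactly where the argument breaks. The sanity check you postpone already settles it. Since $\cos((n+1)\theta)\cos((n-1)\theta)=\cos^2(n\theta)-\sin^2\theta$, you get $\mathcal{L}(\cos n\theta)=\sin^2\theta$, and likewise $\mathcal{L}(\cosh n\theta)=-\sinh^2\theta$. These are constants, so neither family is fixed by $\mathcal{L}$ for any $\theta$, under any indexing convention. For $\theta\neq0$, $\cosh(n\theta)$ is strictly log-convex, not even log-concave, so Theorem~\ref{thm:fixed-point}(i) fails too. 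Your own Step 2 shows this is structural rather than a normalization issue. Once $a_{n+1}=c\,a_n-a_{n-1}$, $\mathcal{L}(a_n)$ is a constant $K$ (Theorem~\ref{thm:tight-constant}(i) with $\lambda_1\lambda_2=1$). Then $\mathcal{L}(a_n)=a_n$ forces $a_n\equiv K$, hence $\mathcal{L}(a_n)=0$ and $K=0$. So ``comparing the constant with $a_n$'' eliminates every nontrivial solution instead of fixing an amplitude and phase. The reduction itself is also wrong. A direct computation gives $c_{n+1}-c_n=\bigl(\mathcal{L}(a_n)-\mathcal{L}(a_{n+1})\bigr)/(a_na_{n+1})$. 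So $c_n$ is constant exactly when $\mathcal{L}(a_n)$ is, and the four-term relation does not imply that.

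Step 1 fails in both directions, and one example shows it. The sequence $a_n=\binom{n+2}{2}=1,3,6,10,\dots$ is a genuine fixed point: $a_n^2-a_{n+1}a_{n-1}=a_n$ for all $n\ge1$, and also at $n=0$ with $a_{-1}=0$. But the four-term relation at $n=1$ would force $a_3=21$ instead of $10$. So no elimination can yield that relation; the one you describe actually produces $a_n^2(a_{n+1}+a_{n+2})=a_{n+1}^2(a_{n-1}+a_n)$. Conversely, the solution of the four-term relation starting $1,3,6$ continues with $a_3=21$. It has $\delta_1=0$ yet $\delta_2=-33$, so even your added initial condition does not propagate. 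It also gives $c_1=7/3\neq4=c_2$, which refutes the Step 2 reduction. More simply, $a_n\equiv1=\cos(0\cdot n)$ satisfies the relation while $\mathcal{L}(a_n)=0$. If you want a true statement to prove, your Step 2 algebra points to one. Take $s_{n+1}=c\,s_n-s_{n-1}$ with $K=s_n^2-s_{n+1}s_{n-1}$ and $cK\neq0$. One checks that $s_ns_{n+1}-s_{n+2}s_{n-1}=cK$, hence $a_n=s_ns_{n+1}/(cK)$ satisfies $\mathcal{L}(a_n)=a_n$. This gives fixed points such as $\binom{n+2}{2}$ (from $s_n=n+1$) and $\sinh(n\theta)\sinh((n+1)\theta)/(2\cosh\theta\,\sinh^2\theta)$. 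These are products of consecutive terms, not $\cos(n\theta)$ or $\cosh(n\theta)$ themselves.
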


\begin{theorem}
  \label{thm:fixed-point}
  Let $(a_n)$ be fixed by $\mathcal{L}$.
  Then $\mathcal{L}^i(a_n) = a_n$ for all $i \ge 1$, and
  $(a_n)$ is $\infty$-log-concave if and only if $a_n \ge 0$ for all $n$.
  In particular:
  \begin{enumerate}[label=\normalfont(\roman*)]
    \item
      $a_n = \cosh(n\theta)$: $a_n > 0$ always,
      so $(a_n)$ is $\infty$-log-concave.
    \item
      $a_n = \cos(n\theta)$: $a_n$ changes sign,
      so $(a_n)$ is not log-concave (hence not $\infty$-log-concave).
  \end{enumerate}
\end{theorem}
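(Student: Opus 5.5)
The plan is to first establish the stabilization $\mathcal{L}^i(a_n)=a_n$ by induction on $i$, then read off the equivalence, and finally treat the two families from Theorem~\ref{thm:fixed-char} case by case.

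\textbf{Stabilization.} The case $i=1$ is the hypothesis $\mathcal{L}(a_n)=a_n$. If $\mathcal{L}^i(a_n)=a_n$ for some $i\ge 1$, then
\[
  \mathcal{L}^{i+1}(a_n)=\mathcal{L}\bigl(\mathcal{L}^i(a_n)\bigr)=\mathcal{L}(a_n)=a_n .
\]
Here we only use that $\mathcal{L}$ is a well-defined map on sequences. The one bookkeeping point is the index range: $\mathcal{L}$ shortens the range by one at the left end, since $b_0$ would need $a_{-1}$. I would fix the convention, as in Definition~\ref{def:logconcave}, that equality $\mathcal{L}(a_n)=a_n$ means equality on the common domain $n\ge 1$. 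The induction is then carried out on that domain.

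\textbf{The equivalence.} By definition, $(a_n)$ is $\infty$-log-concave iff $\mathcal{L}^i(a_n)\ge 0$ for every $i\ge 1$. By the stabilization step each of these conditions is the single condition $a_n\ge 0$. So $\infty$-log-concavity, log-concavity and nonnegativity of $(a_n)$ all coincide. In particular, it suffices to check the sign of $a_n$ itself.

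\textbf{The two families.} I would invoke the characterization of Theorem~\ref{thm:fixed-char} and examine signs directly.
\begin{itemize}
  \item For $a_n=\cosh(n\theta)$ we have $\cosh x\ge 1>0$, which gives (i).
  \item For $a_n=\cos(n\theta)$, I would argue that when $\theta\notin 2\pi\mathbb{Z}$ the sequence $n\theta \bmod 2\pi$ enters the arc $(\pi/2,3\pi/2)$. This is clear by equidistribution when $\theta/2\pi$ is irrational. When $\theta/2\pi=p/q$ in lowest terms with $q\ge 2$, the orbit is the full set of $q$-th roots of unity, which contains a point with negative real part. Hence some $a_n<0$, so $(a_n)$ is not log-concave by the equivalence, giving (ii).
\end{itemize}

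\textbf{Main obstacle.} The hard step is not the formal argument but the input from Theorem~\ref{thm:fixed-char}. A direct check using
\[
  \cosh(x+y)\cosh(x-y)=\cosh^2x+\sinh^2y
\]
gives $\mathcal{L}(\cosh(n\theta))=-\sinh^2\theta$, not $\cosh(n\theta)$. Similarly, $\mathcal{L}(\cos(n\theta))=\sin^2\theta$. So before relying on part (i) and part (ii) I would re-verify the cited characterization of fixed points, and expect the two families to need correcting. The general equivalence ($\infty$-log-concave iff $a_n\ge0$) survives unchanged, since it uses only the fixed-point property.

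The degenerate cases also need care. For instance, $\theta\in 2\pi\mathbb{Z}$ makes $\cos(n\theta)\equiv 1$, which never changes sign. So claim (ii) needs the hypothesis $\theta\notin 2\pi\mathbb{Z}$.
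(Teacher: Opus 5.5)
Your stabilization-and-equivalence argument is the paper's own proof: induction on $i$, using only that $\mathcal{L}$ is a map on sequences, after which only the $i=1$ condition remains. One refinement of your bookkeeping remark. Under your convention the domain shrinks at each step, so the induction gives $\mathcal{L}^i(a_n)=a_n$ only for $n\ge i$. The equivalence is therefore with $a_n\ge 0$ for $n\ge 1$, not for all $n$. For example, $(-1,0,0,1,2,2,1,0,0,\dots)$ is fixed on $n\ge1$ and $\infty$-log-concave, although $a_0<0$. Under the zero-extension convention, $b_0=a_0^2$ forces $a_0\in\{0,1\}$ and the issue disappears.

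Your diagnosis of (i) and (ii) is right, but it should be stated more strongly than ``the families need correcting'': both items are false. The paper's ``(i) and (ii) follow directly'' inherits the error from Theorem~\ref{thm:fixed-char}, which your two identities refute, since neither family contains a single fixed point of $\mathcal{L}$.
\begin{itemize}
\item Since $\mathcal{L}(\cosh(n\theta))=-\sinh^2\theta<0$ for $\theta\ne0$, the sequence $\cosh(n\theta)$ is not even log-concave, so (i) fails.
\item Since $\mathcal{L}(\cos(n\theta))=\sin^2\theta\ge 0$ is constant, $\cos(n\theta)$ \emph{is} log-concave. Moreover $\mathcal{L}^2(\cos(n\theta))=0$ for $n\ge2$, so it is $\infty$-log-concave despite changing sign, and (ii) fails for every $\theta$.
\end{itemize}
The paper's own Theorem~\ref{thm:tight-constant} agrees whenever the roots $e^{\pm\theta}$, resp.\ $e^{\pm i\theta}$, are distinct. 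Both sequences satisfy $a_{n+1}=2c\,a_n-a_{n-1}$ with $c=\cosh\theta$, resp.\ $c=\cos\theta$, so $\beta=-1$ and $A=B=\tfrac12$. For $\cosh$ this gives $AB=\tfrac14>0$, hence no log-concavity. For $\cos$, formula (i) there gives $b_n=\sin^2\theta$.

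So you should drop your bullet-point arguments rather than qualify them. The step ``some $a_n<0$, so $(a_n)$ is not log-concave by the equivalence'' applies the fixed-point hypothesis to a sequence that does not satisfy it. Without that hypothesis, a sign change says nothing about log-concavity (compare Example~\ref{ex:matrix-criterion}), so your caveat $\theta\notin 2\pi\mathbb{Z}$ is moot. Only the general statement is provable, and it is not vacuous: $(1,2,2,1,0,0,\dots)$ is a genuine nonnegative fixed point of $\mathcal{L}$ in either convention, hence $\infty$-log-concave.
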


\begin{proof}
  If $\mathcal{L}(a_n) = a_n$, then by induction $\mathcal{L}^i(a_n) = a_n$
  for all $i \ge 1$.
  Thus $\mathcal{L}^i(a_n) \ge 0$ for all $i$ iff $a_n \ge 0$ for all $n$.
  The cases (i) and (ii) follow directly.
\end{proof}

\subsection{P-recursive sequences with a dominant real root}
\label{ssec:prec-tight}

\begin{theorem}
  \label{thm:tight-prec}
  Let $(a_n)$ satisfy the second-order recurrence~\eqref{eq:recurrence}
  with $d = 2$ and linear coefficients.
  Assume:
  \begin{enumerate}[label=\normalfont(\alph*)]
    \item
      The limiting characteristic polynomial $\chi(\lambda) = \lambda^2 - p_0\lambda - p_1$
      has two real roots $\Lambda_1 > \Lambda_2 > 0$.
    \item
      $a_n > 0$ for all $n \ge 0$.
    \item
      The Tur\'{a}n ratio $\tau_n \coloneqq a_{n-1}a_{n+1}/a_n^2$ satisfies
      $\tau_n \nearrow \tau_\infty < 1$ monotonically.
  \end{enumerate}
  Then $(a_n)$ is $\infty$-log-concave if and only if it is log-concave.
\end{theorem}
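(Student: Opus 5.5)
The implication ``$\infty$-log-concave $\Rightarrow$ log-concave'' holds by definition, since $\mathcal{L}^1(a_n)\ge 0$ is one of the required inequalities. All the work is in the converse. My plan is to propagate information about the Tur\'an ratio from one iterate of $\mathcal{L}$ to the next, until the iterate is $r_0$-factor log-concave. At that point Theorem~\ref{thm:rfactor} finishes the argument.

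First I would note that, under (b) and (c), log-concavity is automatic. Indeed $b_n=a_n^2(1-\tau_n)$, and $\tau_n\le\tau_\infty<1$, so $b_n>0$ for all $n\ge 1$. The key identity is then the recursion for the Tur\'an ratio of the first iterate. Writing $\tau^{(1)}_n \coloneqq b_{n-1}b_{n+1}/b_n^2$, a direct substitution gives
\[
  \tau^{(1)}_n \;=\; \tau_n^2\cdot\frac{(1-\tau_{n-1})(1-\tau_{n+1})}{(1-\tau_n)^2}.
\]
The same formula holds at every level $k$, with $\tau^{(k)}$ in place of $\tau$ and $\tau^{(k+1)}$ on the left. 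Its leading factor $\tau_n^2$ squares the ratio, so heuristically $\tau^{(k)}_n\approx\tau_\infty^{2^k}$. Since $\tau_\infty<1$, this falls below $1/r_0=(3-\sqrt5)/2$ after finitely many steps $k_0$, with $k_0$ depending only on $\tau_\infty$.

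The remaining task is to control the correction factor
\[
  \rho_n \;=\; \frac{(1-\tau_{n-1})(1-\tau_{n+1})}{(1-\tau_n)^2}.
\]
By (c), $1-\tau_n$ is positive, non-increasing, and converges to $1-\tau_\infty>0$. Hence $\rho_n\to 1$, and $\rho_n$ is bounded uniformly by $(1-\tau_0)/(1-\tau_\infty)$ at worst, taking $\tau_0$ to be the first available value of the ratio. I would then show by induction on $k$ three facts about each iterate: $\mathcal{L}^k(a_n)>0$, $\tau^{(k)}_n<1$, and $\tau^{(k)}_n$ converges to $\tau_\infty^{2^k}$. Convergence of $\tau^{(k)}$ is what keeps the next correction factor $\rho^{(k)}_n$ tending to $1$.

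The main obstacle is exactly here. At each level I only know that $\tau^{(k)}_n$ converges; I no longer know that it is monotone, and monotonicity is what gave the uniform bound on $\rho_n$ in the first step. So for small $n$ the product $\tau_n^2\rho_n$ might exceed $1$. I would first try to prove that monotonicity of $\tau_n$ is inherited by $\tau^{(1)}_n$. If that fails, I would add a second-difference argument on $\log(1-\tau_n)$ using the explicit $d=2$ recurrence, i.e.\ Proposition~\ref{prop:2nd-order-logconcave}, so that $\rho_n\le 1$ exactly. That bound would give $\tau^{(1)}_n\le\tau_n^2\le\tau_\infty^2$ for every $n$. Once $\tau^{(k_0)}_n\le 1/r_0$ uniformly in $n$, the sequence $\mathcal{L}^{k_0}(a_n)$ is $r_0$-factor log-concave. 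Theorem~\ref{thm:rfactor} then makes it $\infty$-log-concave, and together with positivity of the finitely many earlier iterates this gives $\infty$-log-concavity of $(a_n)$.
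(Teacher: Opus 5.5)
There is a genuine gap, and it is the step you flag yourself. Nothing in your argument gives $\tau^{(k)}_n<1$, let alone $\tau^{(k_0)}_n\le 1/r_0$, for \emph{every} $n$. Convergence of $\tau^{(k)}_n$ only controls large $n$.

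Moreover, no argument that uses only (b) and (c) can succeed, because the conclusion is false under (b) and (c) alone. A positive sequence is determined by $a_0,a_1$ and its Tur\'an ratios through $a_{n+1}/a_n=(a_1/a_0)\tau_1\cdots\tau_n$. Take $\tau_1=0.01$, $\tau_2=0.89$, $\tau_3=0.899,\dots$ increasing to $0.9$. Then (b) and (c) hold and $(a_n)$ is log-concave. But $\rho_2=0.99\cdot 0.101/0.11^2\approx 8.3$, so $\tau^{(1)}_2=\tau_2^2\rho_2\approx 6.5>1$, and $\mathcal{L}^2(a_n)<0$ at $n=2$.

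Neither of your remedies survives this. First, monotonicity is not inherited by $\tau^{(1)}_n$: in the example $\tau^{(1)}_2\approx 6.5$ while $\tau^{(1)}_n\to 0.81$. Second, the fallback $\rho_n\le 1$ for all $n$ says that $(1-\tau_n)$ is log-concave. A positive, non-increasing, log-concave sequence with a positive limit must be constant. Once some ratio $(1-\tau_{m+1})/(1-\tau_m)$ drops below $1$, all later ratios are at most that value, which forces $1-\tau_n\to 0$. So $\rho_n\le 1$ would force $\tau_n$ to be constant.

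What is missing is the actual role of (a) and the recurrence, which you set aside. They make the statement true by making it vacuous. By (b) and (c), $u_n\coloneqq a_{n+1}/a_n=u_0\tau_1\cdots\tau_n\le u_0\tau_\infty^{\,n}\to 0$. Dividing the recurrence by $a_{n-1}>0$ gives $u_nu_{n-1}=(p_0n+q_0)\,u_{n-1}+p_1n+q_1$. The left side and the first term on the right both tend to $0$. But $p_1=-\Lambda_1\Lambda_2\ne 0$ by (a), so $p_1n+q_1$ is unbounded. Hence no sequence satisfies (a)--(c), and this observation is a complete proof.

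You also cannot import the missing uniform-in-$n$ control from the paper's proof. That proof rests on $a_n\sim Cn^\alpha\Lambda_1^n$, which forces $\tau_n\to 1$ and so contradicts (c). It also obtains $\mathcal{L}^2(a_n)\ge 0$ only for large $n$, and then asserts all iterates by an induction it does not carry out.
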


\begin{proof}
  By the asymptotic theory~\cite{BirkhoffTrjitzinsky1933,WimpZeilberger1985},
  $a_n \sim C\,n^\alpha\,\Lambda_1^n$ for some $C > 0$ and $\alpha \in \mathbb{R}$.
  Under this asymptotics, $\tau_n = 1 - 2\alpha/n + O(n^{-2})$, so
  $b_n = a_n^2(1 - \tau_n) \sim 2\alpha C^2 n^{2\alpha-1}\Lambda_1^{2n}$.

  \medskip
  Under hypothesis (c), $\tau_n < 1$ for all $n$, giving $b_n > 0$.
  The Tur\'{a}n ratio of $(b_n)$ satisfies
  \[
    \frac{b_{n-1}b_{n+1}}{b_n^2} \;\sim\;
    \frac{(n-1)^{2\alpha-1}(n+1)^{2\alpha-1}}{n^{2(2\alpha-1)}} \;\to\; 1^-,
  \]
  so $(b_n)$ is eventually log-concave, i.e., $\mathcal{L}^2(a_n) \ge 0$
  for large $n$.
  By induction, $\mathcal{L}^i(a_n) \ge 0$ for all $i \ge 1$, yielding
  $\infty$-log-concavity.

  \medskip
  Conversely, if $b_n = \mathcal{L}(a_n) < 0$ for some $n$, then
  $(a_n)$ fails to be log-concave and hence fails to be $\infty$-log-concave.
  Thus log-concavity is necessary.
\end{proof}

\begin{remark}
  \label{rem:undecidable}
  The tight criteria in Theorems~\ref{thm:tight-constant},
  \ref{thm:fixed-point}, and~\ref{thm:tight-prec} rest on structural
  properties that make $\mathcal{L}^i(a_n)$ easy to analyze: in the
  constant-coefficient case $\mathcal{L}^2(a_n) = 0$ exactly; in the
  fixed-point case all iterates equal $a_n$; in the P-recursive case
  the asymptotics force eventual positivity.
  For a general P-recursive sequence, $\mathcal{L}(a_n)$ is again
  P-recursive but of higher complexity, and deciding the positivity
  of a P-recursive sequence is not known to be decidable in
  general~\cite{KauersHolonomicFunctions}.
  Consequently, a general finite tight criterion for $\infty$-log-concavity
  appears to be beyond the reach of current methods.
\end{remark}

\section{Examples}
\label{sec:examples}

\begin{example}
  \label{ex:fibonacci}
  \textbf{(Fibonacci-type sequences with real parameters.)}
  Let $a_{n+1} = a_n + a_{n-1}$ with $a_0 = a$, $a_1 = b$.
  Here $\alpha = \beta = 1$, so $\beta = 1 > 0$.
  By Theorem~\ref{thm:tight-constant}(iv), log-concavity fails for
  generic real $(a, b)$ since $\mu = \lambda_1\lambda_2 = -\beta = -1 < 0$
  and $b_n = -AB\,(-1)^{n-1}(\phi - \hat\phi)^2$ alternates in sign.
  The only $\infty$-log-concave Fibonacci-type sequences (for $\alpha = \beta = 1$)
  are the pure powers: $a_n = C\phi^n$ or $a_n = C\hat\phi^n$ (i.e., $AB = 0$).
\end{example}

\begin{example}
  \label{ex:tight-constant-success}
  \textbf{(A second-order constant-coefficient sequence that is $\infty$-log-concave.)}
  Consider $a_{n+1} = 3a_n - 2a_{n-1}$ with $a_0 = 1$, $a_1 = 4$.
  The characteristic roots are $\lambda_1 = 2$ and $\lambda_2 = 1$,
  giving $\beta = -2 < 0$.
  The constants in the general solution are
  \[
    A = \frac{a_1 - a_0\lambda_2}{\lambda_1 - \lambda_2} = 3,
    \qquad
    B = \frac{a_0\lambda_1 - a_1}{\lambda_1 - \lambda_2} = -2,
  \]
  so $AB = -6 \le 0$.
  Since $\beta < 0$ and $AB \le 0$, Theorem~\ref{thm:tight-constant}(iv)
  gives that $(a_n)$ is $\infty$-log-concave.
  Explicitly, $a_n = 3 \cdot 2^n - 2$, and by
  Theorem~\ref{thm:tight-constant}(i):
  \[
    b_n = -AB\,(\lambda_1\lambda_2)^{n-1}(\lambda_1-\lambda_2)^2
        = 6 \cdot 2^{n-1} \;>\; 0,
  \]
  a positive geometric sequence.
  Theorem~\ref{thm:tight-constant}(ii) then gives
  $\mathcal{L}^2(a_n) = 0$ identically.
\end{example}

\begin{example}
  \label{ex:matrix-criterion}
  \textbf{(Matrix sufficient condition for a variable-coefficient recurrence.)}
  Consider the second-order P-recursive recurrence
  \[
    a_{n+1} = 2\,a_n - (n+1)\,a_{n-1},
    \qquad p_0 = 0,\; q_0 = 2,\; p_1 = -1,\; q_1 = -1.
  \]
  A direct computation from~\eqref{eq:Qn-decomp} gives
  \[
    Q^{(0)} = \begin{pmatrix}1 & -1 \\ -1 & 1\end{pmatrix},
    \qquad
    Q^{(1)} = \begin{pmatrix}0 & 0 \\ 0 & 1\end{pmatrix},
    \qquad
    Q_n = \begin{pmatrix}1 & -1 \\ -1 & n+1\end{pmatrix}.
  \]
  We have $\det Q_n = n \ge 0$ and $\Tr Q_n = n+2 > 0$, so $Q_n \psd$
  for all $n \ge 0$.
  By Theorem~\ref{thm:log-concavity-criterion}(ii), $(a_n)$ is
  log-concave for \emph{all} initial data $(a_0, a_1) \in \mathbb{R}^2$.
  In particular, log-concavity holds even when the sequence changes sign:
  for $a_0 = 2$, $a_1 = 3$ one computes $a_2 = 2$, $a_3 = -5$, yet
  $b_2 = a_2^2 - a_3 a_1 = 4 + 15 = 19 \ge 0$.
  This illustrates that the matrix criterion operates on the quadratic
  form in the state vector, independently of the sign pattern of $(a_n)$.
\end{example}

\begin{example}
  \label{ex:failure}
  \textbf{(A sequence where log-concavity fails.)}
  Consider $a_{n+1} = n\,a_n - n\,a_{n-1}$ with $a_0 = 1$, $a_1 = 2$.
  Here $p_0 = 1$, $q_0 = 0$, $p_1 = -1$, $q_1 = 0$.
  One computes $Q^{(1)}$ and finds $\det(Q^{(1)}) = -1/4 < 0$, so
  $Q^{(1)} \not\psd$.
  By Theorem~\ref{thm:log-concavity-criterion}(iii), log-concavity is
  not guaranteed for large $n$.
  Numerical computation gives $b_4 < 0$, confirming that neither
  log-concavity nor $\infty$-log-concavity holds.
\end{example}

\begin{example}
  \label{ex:cosh}
  \textbf{($\mathcal{L}$-fixed sequences.)}
  Consider $a_n = \cosh(n)$.
  By Theorem~\ref{thm:fixed-char}, this sequence is fixed by
  $\mathcal{L}$ (taking $\theta = 1$).
  Since $\cosh(n) > 0$ for all $n$, Theorem~\ref{thm:fixed-point}
  gives that $(a_n)$ is $\infty$-log-concave.
  In contrast, $a_n = \cos(n\pi/3)$, which is also fixed by
  $\mathcal{L}$ (with $\theta = \pi/3$), changes sign and is
  therefore not log-concave.
\end{example}

\section{Open Problems}
\label{sec:open}

Our results raise several natural questions, which we state here.

\begin{enumerate}[label=\normalfont(\arabic*)]
  \item
    \textbf{(Cone of initial data for variable coefficients.)}
    Proposition~\ref{prop:initial-data} gives a matrix condition on
    $\mathbf{v}_0$ for log-concavity of a general P-recursive sequence.
    Characterize the geometry of the cone
    $\mathcal{C}_\infty = \{\mathbf{v}_0 : \mathcal{L}^i(a_n) \ge 0\;\forall\,n,i\}$
    for $d \ge 3$.
    Is it convex? Simply connected?

  \item
    \textbf{(Polynomial coefficients.)}
    Extend the quadratic form analysis to sequences where $c_k(n)$ is
    a polynomial of degree $r \ge 2$.
    In this case $Q_n$ is polynomial of degree $r$ in $n$, and the
    positive semi-definiteness analysis is more involved.

  \item
    \textbf{(Higher-order constant coefficients.)}
    For $d \ge 3$ with constant coefficients, characterize when
    $\mathcal{L}(a_n)$ is itself a sequence satisfying a
    constant-coefficient recurrence.
    If so, the tight criterion of Section~\ref{ssec:constant} would
    extend.

  \item
    \textbf{(Decidability.)}
    Given a P-recursive sequence specified by its recurrence and
    initial data, is $\infty$-log-concavity decidable in finite time?

  \item
    \textbf{(Connection to Boros--Moll.)}
    The rows of Pascal's triangle satisfy a second-order P-recursive
    relation.
    Apply the framework of Section~\ref{sec:logconcave} in that
    setting and relate it to the Boros--Moll
    conjecture~\cite{McNamaraSagan2010}.
\end{enumerate}

\bibliographystyle{elsarticle-num}

\begin{thebibliography}{99}

\bibitem{BirkhoffTrjitzinsky1933}
G.D.~Birkhoff, W.J.~Trjitzinsky,
Analytic theory of singular difference equations,
Acta Math.\ 60 (1933) 1--89.

\bibitem{BorosMoll2004}
G.~Boros, V.~Moll,
Irresistible Integrals: Symbolics, Analysis and Experiments in the
Evaluation of Integrals,
Cambridge University Press, Cambridge, 2004.

\bibitem{Branden2011}
P.~Br\"{a}nd\'{e}n,
Iterated sequences and the geometry of zeros,
J.\ Reine Angew.\ Math.\ 658 (2011) 115--131.

\bibitem{BrandenChasse2015}
P.~Br\"{a}nd\'{e}n, M.~Chasse,
Infinite log-concavity for polynomial P\'{o}lya frequency sequences,
Proc.\ Amer.\ Math.\ Soc.\ 143 (2015) 5225--5229.

\bibitem{Brenti1995}
F.~Brenti,
Combinatorics and total positivity,
J.\ Combin.\ Theory Ser.\ A 71 (1995) 175--218.

\bibitem{EdsonYayenie2009}
M.~Edson, O.~Yayenie,
A new generalization of Fibonacci sequence and extended Binet's formula,
Integers 9 (2009) 639--654.

\bibitem{Giacomelli2025}
P.~Giacomelli,
On log-concave operator acting on sequences and series,
arXiv:2503.15425v2 (2025).

\bibitem{HouZhang2021}
Q.-H.~Hou, T.~Zhang,
Asymptotic $r$-log-convexity and P-recursive sequences,
J.\ Symbolic Comput.\ 107 (2021) 22--37.

\bibitem{Huh2022}
J.~Huh,
Combinatorics and Hodge theory,
in: Proceedings of the International Congress of Mathematicians,
Vol.~1, EMS Press, 2022, pp.~212--261.

\bibitem{Karlin1968}
S.~Karlin,
Total Positivity, Vol.~I,
Stanford University Press, Stanford, CA, 1968.

\bibitem{KauersHolonomicFunctions}
M.~Kauers,
The holonomic toolkit,
in: C.~Schneider, J.~Bl\"{u}mlein (Eds.),
Computer Algebra in Quantum Field Theory,
Springer, Vienna, 2013, pp.~119--144.

\bibitem{LiuWang2007}
L.L.~Liu, Y.~Wang,
On the log-convexity of combinatorial sequences,
Adv.\ Appl.\ Math.\ 39 (2007) 453--476.

\bibitem{LogconcavePrecursive2021}
E.X.W.~Xia,
Asymptotic log-concavity and $2$-log-concavity for P-recursive sequences
via Birkhoff--Trjitzinsky asymptotic theory,
J.\ Difference Equ.\ Appl.\ 27 (2021) 1--20.

\bibitem{McNamaraSagan2010}
P.R.W.~McNamara, B.E.~Sagan,
Infinite log-concavity: developments and conjectures,
Adv.\ Appl.\ Math.\ 44 (2010) 1--15.

\bibitem{MedinaStratub2016}
L.A.~Medina, A.~Straub,
On multiple and infinite log-concavity,
Ann.\ Comb.\ 20 (2016) 125--138.

\bibitem{Saumard2014}
A.~Saumard, J.A.~Wellner,
Log-concavity and strong log-concavity: a review,
Stat.\ Surveys 8 (2014) 45--114.

\bibitem{Stanley1989}
R.P.~Stanley,
Log-concave and unimodal sequences in algebra, combinatorics and
geometry,
Ann.\ New York Acad.\ Sci.\ 576 (1989) 500--535.

\bibitem{Stanley1999}
R.P.~Stanley,
Enumerative Combinatorics, Vol.~2,
Cambridge Studies in Advanced Mathematics, vol.~62,
Cambridge University Press, Cambridge, 1999.

\bibitem{Wagner1992}
D.G.~Wagner,
Total positivity of Hadamard products,
J.\ Math.\ Anal.\ Appl.\ 163 (1992) 459--483.

\bibitem{WimpZeilberger1985}
J.~Wimp, D.~Zeilberger,
Resurrecting the asymptotics of linear recurrences,
J.\ Math.\ Anal.\ Appl.\ 111 (1985) 162--176.

\end{thebibliography}

\end{document}